\title{Subgroup Distortion and the Relative Dehn Functions of Metabelian Groups}
\author{Wenhao Wang}
\address{Department of Mathematics\\
  Vanderbilt University\\
 Nashville, TN 37240}
\email[W.~Wang]{wenhao.wang@vanderbilt.edu}
\newtheorem{theorem}{Theorem}[section]
\newtheorem{lemma}[theorem]{Lemma}
\newtheorem{corollary}[theorem]{Corollary}
\newtheorem*{corollaries}{Corollary}
\newenvironment{customthm}[1]
  {\innercustomthm}
  {\endinnercustomthm}
\theoremstyle{definition}
\newtheorem{proposition}[theorem]{Proposition}
\newtheorem{prob}[theorem]{Problem}
\theoremstyle{remark}
\newtheorem*{rems}{Remark}
\newcommand{\llangle}{\langle \langle}
\newcommand{\rrangle}{\rangle \rangle}
\DeclareMathOperator{\area}{Area}
\DeclareMathOperator{\supp}{supp}
\DeclareMathOperator{\rk}{rk}
\DeclareMathOperator{\OF}{OF}
\begin{document}
\maketitle

\begin{abstract}
  We show the connection between the relative Dehn function of a finitely generated metabelian group and the distortion function of a corresponding subgroup in the wreath product of two free abelian groups of finite rank. Further, we show that if a finitely generated metabelian group $G$ is an extension of an abelian group by $\mathbb Z$ the relative Dehn function of $G$ is polynomially bounded. Therefore, if $G$ is finitely presented, the Dehn function is bounded above by the exponential function up to equivalence. 
\end{abstract}

\section{Notation}
\label{notation}

Let $G$ be a group. For elements $x,y\in G, n\in \mathbb N$, our conventions are $x^{ny}=y^{-1}x^ny, [x,y]=x^{-1}y^{-1}xy$. We use double bracket $\llangle \cdot \rrangle_G$ to denote the normal closure of a set in the group $G$. Sometimes we omit the subscript when there is no misunderstanding in the context. 

In addition, for a group $G$ and a commutative ring $K$ with $1\neq 0$, we let $KG$ be the group ring of $G$ over $K$. An element $\lambda\in KG$ is usually denoted as $\lambda=\sum_{g\in G} \alpha_g g, \alpha_g\in K$ where all but finitely many $\alpha_g$'s are 0. We also regard $\lambda$ as a function $\lambda: G\to K$ with finite support, where $\lambda(g)=\alpha_g$. We let $|\lambda|=\sum_{g\in G} |\alpha_g|$.

For groups $A$ and $T$, let $B=\oplus_{t\in T}A^t$ be the direct sum of copies of $A$ indexed by elements in $T$. Then \emph{the wreath product} $A\wr T$ is defined to be the semidirect product $B\rtimes T$ where $T$ acts on $B$ by $t\circ(a_\omega)=(a_{t^{-1}\omega})$. The subgroup $B$ is called the base group of the wreath product. 

Suppose $G$ is an extension of $A$ by $T$ where both $A$ and $T$ are abelian. $A$ has a natural module structure over the group ring $\mathbb ZT$, and the action of $T$ on $A$ is given by conjugation. In this case, we also say that $G$ is an extension of a $T$-module $A$ by $T$. In this paper, we will use the following notation for actions of $\mathbb ZT$ on $A$. Let $\lambda=\sum_{t\in T} \alpha_t t \in \mathbb ZT$. Then for $a\in A$, we define
\[a^\lambda:=\prod_{t\in T}a^{\alpha_t t}.\]

\section{Introduction and Results}
\label{intro}

In this paper, we study two different functions associates with finitely generated metabelian groups, both of which are introduced to describe some geometric properties of groups, the (relative) Dehn function and the subgroup distortion function. They are naturally related since for a finitely presented group the Dehn function can be regarded as the distortion function of a subgroup in a free group over an infinite generating set (the set of all conjugates of relators). Asides from the canonical connection, we investigate the connection between the subgroup distortion function (of a special type of finitely generated subgroups) in the wreath product to the Dehn function relative to the variety of metabelian groups, which it interesting  because it provides an estimation of the usual Dehn function of a finitely presented metabelian group \cite{wang2020dehn}. 

We first define the (relative) Dehn function. Recall that a \emph{variety} of groups is a class of groups that closed under taking subgroups, epimorphic images, and unrestricted direct products. Inside a variety, we can talk about relative free groups and relative presentations, just like in the usual sense (the variety of all groups). Let $\mathcal V$ be a variety. The free group relative to the variety $\mathcal V$ on a set $X$, denoted by $\tilde F(X)$, is a group in $\mathcal V$ satisfying the following property: it is equipped with a map $\theta: X\to \tilde F(X)$ such that for every group $G$ in $\mathcal V$ and every set map $\sigma: X\to G$ there exists a unique homomorphism $\varphi: \tilde F(X)\to G$ such that the following diagram commutes,
\[
\begin{tikzcd}    
X \arrow[r,"\theta"] \arrow[rd,"\sigma"] &\tilde F(X) \arrow[d, "\varphi"]\\
&G
\end{tikzcd}.
\]
In particular, a group in $\mathcal V$ generated by $|X|$ elements is a epimorphic image of $\tilde F(X)$. A relative finite presentation $\mathcal P=\langle X\mid R\rangle_{\mathcal V}$ of $G$ consists of two finite sets $X$ and $R$ where $R$ is a subset of $\tilde F(X)$ such that there exists an epimorphism $\varphi:\tilde F(X)\to G$ and the kernel of $\varphi$ is $\llangle R\rrangle_{\tilde F(X)}$. Let $w$ be a word in $G$ such that $w=_G 1$. Then $w$ lies in the normal closure of $R$. Thus $w$ can be written as 
\[w=_{\tilde F(X)} \prod_{i=1}^l f_i^{-1}r_i{f_i} \text{ where }r_i\in R\cup R^{-1},f_i\in \tilde F(X).\]
The smallest possible $l$ is called the relative area of $w$, denoted by $\widetilde\area_\mathcal P(w)$. Consequently, the Dehn function relative to $\mathcal V$ with respect to the presentation $\mathcal P$ is defined as  
\[\tilde\delta_\mathcal P(n)=\sup\{\widetilde\area_\mathcal P(w)\mid |w|_{X}\leqslant n\}.\]
Here $|\cdot|_{X}$ is the word length in alphabet $X$. 

Throughout this paper, we focus on two varieties of groups: the variety of all groups and the variety of metabelian groups, where the later will be denoted by $\mathcal S_2$. 

If $\mathcal V$ is the variety of all groups, then the Dehn function relative to $\mathcal V$ is just the usual Dehn function. 

Dehn functions are defined up to an asymptotic equivalence $\approx$ taken on functions $\mathbb N \to \mathbb N$ by $f \approx g$ if and only if $f \preccurlyeq g$ and $g \preccurlyeq f$ where $f \preccurlyeq g$ if and only if there exists $C>0$ such that $f(n)\leqslant Cg(Cn)+Cn+C$ for all $n\in \mathbb N$. One can verify that $\approx$ is an equivalence relation. This relation preserves the asymptotic nature of a function. For example, it distinguishes polynomials of different degrees and likewise polynomials and the exponential function. It also distinguishes functions like $n^p$ and $n^p\log n$ for $p>1$. On the other hand, it identifies all polynomials of the same degree, and likewise all single exponentials, i.e., $a^n\approx b^n$ for $a,b>1$. 

Despite the dependence of Dehn function on finite presentations of a group, all Dehn functions of the same finitely presented group are equivalent under $\approx$ \cite{gromov1996geometric}, i.e., given a finitely presented group $G$ with finite presentations $\mathcal P$ and $\mathcal P'$, one can show that $\delta_{\mathcal P} \approx \delta_{\mathcal P'}$. Thus, we define the \emph{Dehn function} of a finitely presented group $G$, $\delta_G$, as the Dehn function of any of its finite presentation. 

If $\mathcal V$ is the variety of metabelian groups $\mathcal S_2$. It has been shown that the relative Dehn functions are also independent of the choice of finite presentations up to equivalence \cite{Fuh2000}. Therefore it is valid to denote the relative Dehn function of a finitely generated metabelian group $G$ by $\tilde \delta_G$. One non-trivial property of metabelian groups is that all finitely generated metabelian groups are relatively finitely presentable in $\mathcal S_2$ \cite{Hall1954}.

In what follows, the Dehn function of a finitely presented group $G$ and the area of a word $w$ in $G$ relative to the variety of all groups will be denoted by $\delta_G(n)$ and $\area(w)$ respectively, while the Dehn function of a finitely generated group $G$ and the area of a word $w$ in $G$ relative to the variety of metabelian groups will be denoted by $\tilde \delta_G(n)$ and $\widetilde \area(w)$ respectively.

Next, let us talk about the distortion function. Let $G$ be a finitely generated group with a finite generating set $X$ and $H$ be a a subgroup of $G$ with finite generating set $Y$. The \emph{distortion function} of $H$ in $G$ is 
\[\Delta_H^G(n)=\sup\{|w|_Y\mid w\in H, |w|_X\leqslant n\}.\]
We consider a slightly different equivalence relation for distortion functions. For non-decreasing functions $f$ and $g$ on $\mathbb N$, we say that $f\preceq g$ if there exists a constant $C$ such that $f(n)\leqslant Cg(Cn)$. Hence we say that two functions $f$ and $g$ are equivalent, written $f\asymp g$, if $f\preceq g$ and $g\preceq f$. As expected, the distortion function is independent of the choice of the finite generating set under this equivalence relation \cite[Proposition 8.98]{drute2018Geometric}. The reason we consider $\asymp$ rather than $\approx$ is that if the subgroup is infinite then the distortion function is at least linear. We say a subgroup is \emph{undistorted} if the distortion function is equivalent to a linear function.

Let $A$ and $T$ be free abelian groups with bases $\{a_1,a_2,\dots,a_m\}$ and $\{t_1,t_2,\dots,t_k\}$ respectively. Consider the wreath product $W:=A\wr T$. The base group $B:=\llangle A\rrangle$ is a $T$-module. For a finite subset $\mathcal X=\{f_1,f_2,\dots,f_l\}$ of $B$, let $H$ be the subgroup of $W$ generated by $\mathcal X\cup\{t_1,t_2,\dots,t_k\}$ and $G$ be the group $W/\llangle \mathcal X\rrangle$. 

Our main result is the following:
\begin{customthm}{A}[\cref{subgroupDistortion}]
	\label{subgroupDistortion}
	Let $W,H,G$ be groups defined as above, then 
	\[\Delta_{H}^W(n) \preccurlyeq \tilde \delta_G^k(n)+n^k,\tilde\delta_G(n)\preccurlyeq\max\{n^3, (\Delta_{H}^W(n^2))^3\}.\]
	In particular, if $k=1$, 
	\[\Delta_{H}^W(n) \preccurlyeq \tilde \delta_G(n).\]
\end{customthm}

\cref{subgroupDistortion} leads to some interesting examples.

\begin{corollaries}[\cref{relativeLowerbound}]
For each $l\in \mathbb N$, there exists a finitely generated metabelian group such that its relative Dehn function is asymptotically greater or equal to $n^l$.
\end{corollaries}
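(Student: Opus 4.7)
The plan is to apply \cref{subgroupDistortion} in the case $k = 1$, where the statement simplifies to $\Delta_H^W(n) \preccurlyeq \tilde\delta_G(n)$, and to exhibit for each $l$ a specific instance of the setup whose subgroup distortion is at least $n^l$. Once this is in hand the corollary follows immediately: $n^l \preccurlyeq \Delta_H^W(n) \preccurlyeq \tilde\delta_G(n)$, and $G$ is finitely generated and metabelian by construction.

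For the example I would take $W = \mathbb{Z} \wr \mathbb{Z}$ with base generator $a$ and stable letter $t$, identifying the base group $B$ with the group ring $\mathbb{Z}[t, t^{-1}]$ via $a \leftrightarrow 1$. Put $\mathcal{X} = \{f\}$ with $f$ corresponding to the polynomial $(1-t)^l$, and set $H = \langle f, t\rangle$ and $G = W/\llangle f\rrangle_W$; then $G$ is finitely generated and metabelian as a quotient of the metabelian group $W$. The witnesses of polynomial distortion are the elements $q_N \in H \cap B$ corresponding to the polynomial $(1-t^N)^l$, $N \geq 1$. Membership in $H$ follows from the identity $q_N = f \cdot r_N$ with $r_N = (1 + t + \cdots + t^{N-1})^l$, via $1-t^N = (1-t)(1 + t + \cdots + t^{N-1})$. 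On the $W$ side, $(1-t^N)^l$ is supported on $\{0, N, 2N, \ldots, lN\}$ with coefficients bounded in absolute value by $2^l$, so a direct expansion gives $|q_N|_W \leq C_l N$ for a constant $C_l$ depending only on $l$. On the $H$ side, any word of length $L$ over $\{f^{\pm 1}, t^{\pm 1}\}$ that evaluates in $B$ lands on $f \cdot r$ for some $r \in \mathbb{Z}[t, t^{-1}]$ with $|r| \leq L$, because each occurrence of $f^{\pm 1}$ contributes one $\pm 1$ to a single coefficient of $r$ while the $t$'s only shift indices. Since $\mathbb{Z}[t, t^{-1}]$ is an integral domain and $f \neq 0$, the factor $r$ is determined by the product $f \cdot r$, so in particular $|r_N| \leq |q_N|_H$; and since $r_N$ has nonnegative coefficients summing to $N^l$, this gives $|q_N|_H \geq N^l$. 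Combining, $\Delta_H^W(n) \succcurlyeq n^l$, and \cref{subgroupDistortion} delivers $\tilde\delta_G(n) \succcurlyeq n^l$.

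The main obstacle lies in rigorously justifying the lower bound $|q_N|_H \geq N^l$. It rests on two features of the setup: the abelianness of $B$, which lets us translate multiplication of conjugates of $f$ into coefficient addition in $\mathbb{Z}[t, t^{-1}]$; and the integral-domain property of $\mathbb{Z}[t, t^{-1}]$, which pins down the quotient polynomial $r_N$. The remaining bookkeeping, namely tracking the $T$-coordinate of a word in $\{f^{\pm 1}, t^{\pm 1}\}$ to extract its image in $B$ from the semidirect-product structure $W = B \rtimes T$, is routine but must be made precise.
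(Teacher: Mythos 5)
Your proposal is correct and follows the same skeleton as the paper: apply \cref{subgroupDistortion} with $k=1$ to a subgroup of $\mathbb Z\wr\mathbb Z$ generated by a single base-group element together with $t$. The difference is in how the distortion lower bound is obtained. The paper simply cites Davis--Olshanskiy, who show that $\langle w_l, t\rangle$ with $w_l$ the $(l-1)$-fold commutator $[\dots[a,t],\dots,t]$ (i.e.\ $a^{(t-1)^{l-1}}$) has distortion exactly $n^l$. You instead take $f=(1-t)^l a$ (which, up to sign, is the $l$-fold commutator, so its Davis--Olshanskiy distortion is actually $n^{l+1}$; your bound is not tight but that is harmless) and prove the lower bound $\Delta_H^W(n)\succcurlyeq n^l$ from scratch: the witnesses $a^{(1-t^N)^l}$ have $W$-length $O(N)$, while the integral-domain argument in $\mathbb Z[t,t^{-1}]$ forces any $\{f^{\pm1},t^{\pm1}\}$-word representing them to contain at least $|(1+t+\dots+t^{N-1})^l|=N^l$ occurrences of $f^{\pm1}$. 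That argument is sound (the bookkeeping you flag --- reading off the coefficient polynomial $r$ with $|r|\leqslant L$ from a word of length $L$ in the semidirect product --- is indeed routine), and it buys you a self-contained proof of the corollary that does not depend on the Davis--Olshanskiy distortion computation, at the cost of losing the sharp value of the distortion, which the corollary does not need.
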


The distortion function of subgroups in $A\wr \mathbb Z$ has been studied extensively by Davis and Olshanskiy \cite{davis2011Subgroup}. Combining their result with \cref{subgroupDistortion}, we immediately have

\begin{customthm}{B}[\cref{overZ}]
	Let $G$ be a finitely generated metabelian group such that $G$ is an extension of an abelian group $A$ by a virtually cyclic abelian group $T$. Then the relative Dehn function of $G$ is polynomially bounded. If in addition $G$ is finitely presented, the Dehn function of $G$ is asymptotically bounded above by the exponential function.
\end{customthm}

This theorem gives the exponential upper bound of Dehn functions for many examples including the metabelian Baumslag-Solitar groups $BS(1,n)$ and $\mathbb Z^n\rtimes_\phi \mathbb Z$ where $\phi\in GL(n,\mathbb Z)$ (See in \cite{bridson1996optimal}). And it also improves the main results in \cite{wang2020dehn}.

Moreover, we estimate the relative Dehn function of various examples.

\begin{customthm}{C}
	\begin{enumerate}[(1)]
		\item (\cref{metaBaum}) The metabelianized Baumslag-Solitar group $\tilde{BS}(n,m)=\langle a,t\mid (a^{n})^{t}=a^m\rangle_{\mathcal S_2}$ has at most cubic relative Dehn function when $n\neq m$ and has at most quartic relative Dehn function when $n=m$.
		\item (\cref{metaBaumCorollary}) The metabelianized Baumslag-Solitar group $\tilde{BS}(n,m)=\langle a,t\mid (a^{n})^{t}=a^m\rangle_{\mathcal S_2}, m>2, m=n+1$ has at most quadratic relative Dehn function.
		\item (\cref{lamplighter}) The lamplighter groups $L_m$ have at most cubic relative Dehn function.
		\item (\cref{relativeL2}) The lamplighter group $L_2$ has linear relative Dehn function.
	\end{enumerate}
\end{customthm}

\emph{The structure of this paper.} In \cref{prem} we will state some preliminaries on the topic of Dehn function of a module and the relative Dehn function of a finitely generated metabelian group. Next in \cref{relativeDehn4}, we estimate the relative Dehn function for different examples including the Baumslag-Solitar groups and Lamplighter groups. Finally in \cref{relativeDehn5} we prove the main theorem about distortion function and the relative Dehn function.

\emph{Acknowledgement.} I would like to thank my advisor Mark Sapir who points out to me the study of distortion functions for subgroups in $A\wr \mathbb Z$ where $A$ is abelian.

\section{Preliminaries}
\label{prem}

\subsection{The Dehn function of a Module}
\label{prem1}

Let $T$ be a free abelian group of rank $k$ and $R=\mathbb ZT$ the group ring of $T$. In what follows, we will only discuss modules over $R$.

Similar to groups, we have free modules and hence we can define the presentation of a module. A subset $\{f_1,f_2,\dots,f_l\}$ of a $R$-module $M$ is called a \emph{generating set} if every $f\in M$ is the linear combination of them, i,e, there exists $\alpha_1,\alpha_2,\dots,\alpha_l\in R$ such that 
\[f=\alpha_1 f_1+\alpha_2 f_2+\dots+\alpha_l f_l.\]

A set of elements $\{f_1,f_2,\dots,f_l\}$ of a module $M$ is called \emph{independent} if no nontrivial linear combination is zero, that is,

\[\text{If } \alpha_1 f_1+\alpha_2 f_2+\dots+\alpha_l f_l=0, \text{ then }\alpha_i=0, \text{ for } i=1,2,\dots,l.\]

A \emph{basis} is an independent generating set. 

One immediate example for a $R$-module is $R^m$. The addition and scalar multiplication on $R^m$ are the following, respectively:
\begin{align*}
	(a_1,a_2,\dots,a_m)+(b_1,b_2,\dots,b_m)&=(a_1+b_1,a_2+b_2,\dots,a_m+b_m),\\
	r(a_1,a_2,\dots,a_m)&=(ra_1,ra_2,\dots,ra_m).
\end{align*}
The module $R^m$ is called a \emph{free $R$-module of rank $m$}. The canonical basis of $R^m$ is $\{e_1,e_2,\dots,e_m\}$ where $e_i=(0,\dots,1,\dots,0)$ with all but the $i$-th entry is 0. 

A submodule of the free module $R^1$ is an ideal in the ring $R$.

Given a free $R$-module $M$ of finite rank and a submodule $S$ generated by a finite set $\{f_1,f_2,\dots,f_l\}$, the membership of a submodule $S$ we are considering in this thesis is the following

\begin{prob}
\label{memberPro}
Given an element $f$ in $M$, decide whether $f$ in $S$, i.e., if there exists elements $\alpha_1,\alpha_2,\dots,\alpha_l$ such that 
\[f=\alpha_1f_1+\alpha_2f_2+\dots+\alpha_l f_l.\]
\end{prob}

A \emph{homomorphism} $\varphi: M \to N$ of $R$-modules is a map which is compatible with the laws of composition:

\[\varphi(f+f')=\varphi(f)+\varphi(f'), \varphi(rf)=r\varphi(f)\]

for all $f,f'\in M, r\in R$. A bijective homomorphism is called an \emph{isomorphism}. 

Last we define the concept of quotient modules. Let $R$ be a ring, and let $S$ be a submodule of an $R$-module $M$. The quotient $M/S$ is the additive group of cosets $\bar f=f+S$. And the scalar multiplication is defined by 
\[r\bar f=\overline{rf}.\]
Thus $M/S$ is made an $R$-module. 

Conversely, let $A$ be a finitely generated $R$-module, then there exists a free $R$-module $M$ with basis $\{a_1,a_2,\dots,a_m\}$ and a submodule $S$ of $M$ such that $A\cong M/S$. Since $R$ is a Noetherian ring, $S$ is finitely generated, and hence we assume its generating set is $\{f_1,f_2,\dots,f_l\}$. Therefore we have a module presentation of $A$ as the following:
\[A=\langle a_1,a_2,\dots,a_m\mid f_1,f_2,\dots, f_l\rangle.\]

For an element $f=\mu_1 a_1+\mu_2 a_2+\dots +\mu_m a_m$ in $M$ we define its \emph{length}, denoted by $\|f\|$, to be the following:
 \[\|f\|=\sum_{i=1}^l |\mu_i|+\mathrm{reach}(f),\]
 where $\mathrm{reach}(f)$ is the minimal length over the lengths of close loops that starts at $1$ and passes through all points in $\cup_{i=1}^l \supp{\mu_i}$ in the Cayley graph of $T$. Another way to think of this length $\|\cdot\|$ is that it is the minimal length of a group word among words that are rearranges of all conjugates of elements in $\mathcal A$ in $a_1^{\mu_1}a_2^{\mu_2}\dots a_m^{\mu_m}$. For example, suppose $m=k=1$, we have 
 \[\|(t_1^n+t_1^{n-1}+\dots+t_1+1)a_1\|=(n+1)+2n=3n+1,\]
 because the minimal length of a loop passing $\{1,t,t^2,\dots,t^n\}$ is $2n$. Note that $a_1^{t_1^n+t_1^{n-1}+\dots+t_1+1}=t_1^{-n}a_1t_1a_1\dots a_1t_1a_1$ is a group word of length $3n+1$ in the alphabet $\{a_1\}\cup\{t_1\}$.
  
 Then for every element $f$ in $S$, there exists $\alpha_1,\alpha_2,\dots,\alpha_l\in R$ such that 
\[f=\alpha_{1}f_1+\alpha_2 f_2+\dots+\alpha_l f_l.\]
We denote by $\widehat\area_A(f)$ the minimal possible $\sum_{i=1}^l |\alpha_i|$ ($|\cdot|$ is defined in \cref{notation}). Then \emph{the Dehn function} of the $R$-module $A$ is defined to be 
\[\hat\delta_A(n)=\max\{\widehat \area_A(f)\mid \|f\|\leqslant n\}.\]
As expected, the Dehn function of a module is also independent from the choice of the finite presentation \cite{Fuh2000}. 
\begin{rems}
	Now we have three different types of Dehn functions in this paper: the Dehn function, the relative Dehn function and the Dehn function of a module. They are similar and we distinguish them by the notation. We denote by $\delta_G(n), \area(w)$ the Dehn function of $G$ and the area of a word $w$; $\tilde \delta_G(n),\widetilde \area(w)$ the relative Dehn function and the relative area of a word $w$, $\hat \delta_A(n), \widehat \area(f)$ the Dehn function of the module $A$ and the area of a module element $f$.
\end{rems}

The membership problem \cref{memberPro} can be regarded as the word problem of the quotient $M/S$.

\begin{prob}
\label{memberPro}
Given an element $f$ in $A$, decide whether $f$ represents the trivial element in $A$.
\end{prob}

It turns out that the Dehn function of a module plays an essential role for understanding the relative Dehn function for finitely generated metabelian group \cite{Fuh2000}. 

Last, let us define a well-order $\prec$ on the ring $R=\mathbb ZT$. On $\mathbb Z$, we define an order $\prec_{\mathbb Z}$ as following
\[0\prec_{\mathbb Z} 1 \prec_{\mathbb Z} 2 \prec_{\mathbb Z} \dots \prec_{\mathbb Z} -1 \prec_{\mathbb Z} -2 \prec_{\mathbb Z} \dots.\]
For monomials in $R$, we use the degree lexicographical order (also called shortlex or graded lexicographical order) $\prec_R$ which is defined with respect to the convention $t_1 \succ t_1^{-1} \succ \dots \succ t_k\succ t_k^{-1}$, i.e. for $\mu_1=t_1^{n_1}t_1^{-n_2}t_2^{n_3}t_2^{-n_4}\dots t_k^{n_{2k-1}}t_k^{-n_{2k}}, \mu_2=t_1^{m_1}t_1^{-m_2}t_2^{m_3}t_2^{-m_4}\dots t_k^{m_{2k-1}}t_k^{-m_{2k}}$
\[\mu_1\prec_R \mu_2 \text{ if }\sum_{i=1}^{2k}|n_i|<\sum_{i=1}^{2k} |m_i| \text{ or } \sum_{i=1}^{2k}|n_i|=\sum_{i=1}^{2k} |m_i|, \mu_1\prec_{lex} \mu_2,\]
where $\prec_{lex}$ is the usual lexicographical order which is defined in the following way
\[t_1^{n_1}t_1^{-n_2}t_2^{n_3}t_2^{-n_4}\dots t_k^{n_{2k-1}}t_k^{-n_{2k}}\prec_{lex} t_1^{m_1}t_1^{-m_2}t_2^{m_3}t_2^{-m_4}\dots t_k^{m_{2k-1}}t_k^{-m_{2k}}\]
if $n_i<m_i$ for the first $i$ where $n_i$ and $m_i$ differ. Note that $\prec_R$ on $\mathcal X$ in fact is a well-oder while $\prec_{lex}$ might not be (See in  \cite{baader1999term}).

Finally we set $\prec$ on $R=\mathbb ZT$ to be the lexicographical order based on $T \succ \mathbb Z$. It is not hard to verify that $\prec$ is a well-order on $\mathcal T$. The degree $\deg \mu$ of $\mu\in \mathbb ZT$ is define to be the degree of its leading monomial and the degree $\deg f$ of $fin M$ is define to be the maximal degree of coefficients of basis.

\subsection{Relative Dehn Functions of Finitely Generated Metabelian Groups}
\label{prem2}

As we discuss above, the Dehn function for a finitely generated metabelian group relative to $\mathcal S_2$ is always defined. So it provides a convenient tool to study finitely generated metabelian groups. 

Let $G$ be a finitely generated metabelian group. Then $G$ sits inside a short exact sequence. 
\[1\to A\to G\to T\to 1,\]
where $A$ and $T$ are abelian. Since $G$ is finitely generated, $T$ is finitely generated abelian group. We now choose $A$, $T$ among all such short exact sequences such that the torsion-free rank of $T$ is minimized. We denote by $\rk(G)$ the minimal torsion-free rank of $T$. 

Now let $k=\rk(G)$ and $\pi:G\to T$ be the canonical quotient map. It is not hard to show that there exists a subgroup $G_0$ of finite index in $G$ such that $\pi(G_0)=\mathbb Z^k$. It has been shown that the Dehn function, the relative Dehn function and $\rk(G)$ are all preserved (up to equivalence) under taking finite index subgroups \cite{gromov1996geometric}, \cite{wang2020dehn}. Thus, in what follows, we always assume that $T$ is a free abelian group.

For the finitely generated metabelian group $G$, let $\{t_1,t_2,\dots,t_k\}$ be a subset of $G$ such that its image in $T$ form a basis. Since $A$ is a normal subgroup in $G$, by \cite{Hall1954}, it is a normal closure of a finite set. Let $\mathcal A=\{a_1,a_2,\dots,a_m\}$ be a subset of $A$ satisfying: (1) $\llangle \mathcal A\rrangle=A$; (2) $\mathcal A$ contains all commutators of $\{t_1,t_2,\dots,t_k\}$, i.e., for any pair $1\leqslant i<j\leqslant k$, there exists $l(i,j)\in \{1,2,\dots,m\}$ such that $a_{l(i,j)}=[t_i,t_j]$, $l(i,j)=l(i',j')$ if and only if $i=i',j=j'$.
 
We associate $G$ with an auxiliary group $\tilde G$, which has the following relative presentation:
\begin{align*}
	\tilde G=&\langle a_1,a_2,\dots,a_m,t_1,t_2,\dots,t_k\mid a_{l(i,j)}=[t_i,t_j], \\
	&[a,b]=1,[a,b^t]=1, 1\leqslant i<j\leqslant k, a,b\in \mathcal A, t\in \mathcal T\rangle_{\mathcal S_2}
\end{align*}
Relations $\{[a,b]=1,[a,b^t]=1, a,b\in \mathcal A, t\in \mathcal T\}$ is enough along with all metabelian relations inherited from the free metabelian groups. And the area of $[a,b^u]$ is linearly controlled by the length of $u$, that is,

\begin{lemma}[Wang \cite{wang2020dehn}]
\label{relativeCommutative}
	$\{[a,b]=1,[a,b^t]=1, a,b\in \mathcal A, t\in \mathcal T\}$ generates all commutative relations $[a,b^u]=1, a,b\in \mathcal A, u\in F(\mathcal T)$ in the presentation relative to the variety of metabelian groups. Moreover, the relative area of $[a,b^u]$ is bounded by $4|u|-3$.
\end{lemma}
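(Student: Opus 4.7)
The plan is to induct on $|u|$, the word length in the free group $F(\mathcal T)$.

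The base cases $|u|\leq 1$ are direct. When $u=1$, the word $[a,b]$ is itself a defining relator. When $|u|=1$ and $u\in\mathcal T$, $[a,b^u]$ is a defining relator; when $u=t^{-1}$ for $t\in\mathcal T$, the commutator identity $[x^g,y]=[x,y^{g^{-1}}]^g$ rewrites $[a,b^{t^{-1}}]$ as a conjugate of the inverse of the relator $[b,a^t]$. In all cases the relative area is at most $1\leq 4|u|-3$ for $|u|\geq 1$ (and $1$ for $|u|=0$).

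For the inductive step, write $u=u's$ with $s\in\mathcal T^{\pm 1}$ and $|u'|=|u|-1$, and set $c:=a(a^s)^{-1}$. The key structural observation is that $c$ lies in the derived subgroup $\tilde F'$ of the relatively free metabelian group $\tilde F=\tilde F(\mathcal A\cup\mathcal T)$, since $a$ and $a^s$ have equal images in the abelianization $\tilde F/\tilde F'$. Applying the product rule $[xy,z]=[x,z]^y[y,z]$ with $xy=c\cdot a^s=a$ yields the decomposition
\[
[a,b^{u's}]\;=\;[c,b^{u's}]^{a^s}\cdot[a^s,b^{u's}].
\]

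The factor $[a^s,b^{u's}]$ rewrites as $[a,b^{u'}]^s$ via $b^{u's}=(b^{u'})^s$ and $[x^g,y^g]=[x,y]^g$, so by the induction hypothesis it has relative area at most $4(|u|-1)-3$. The other factor is where the metabelian law pays off: because $c\in\tilde F'$ and $\tilde F'$ is abelian, the conjugation action of any $w\in\tilde F$ on $c$ depends only on the image of $w$ in $\tilde F/\tilde F'$; since $b^{u's}$ and $b$ share this image, this gives $[c,b^{u's}]=[c,b]$ as an identity in $\tilde F$ (contributing $0$ to relative area). Unfolding $[c,b]=[a(a^s)^{-1},b]$ via the product rule and $[x^{-1},y]=([x,y]^{-1})^{x^{-1}}$ writes it as a conjugate of the relator $[a,b]$ together with a conjugate of $[a^s,b]=[a,b^{s^{-1}}]^s$, each of relative area $1$. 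Summing all contributions gives the bound $4(|u|-1)-3+2=4|u|-5\leq 4|u|-3$.

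The main obstacle is recognizing the collapse $[c,b^{u's}]=[c,b]$, which is exactly where the metabelian law converts what would otherwise be a length-dependent cost into a constant additive cost per letter of $u$. Without passing through the shift element $c\in\tilde F'$, a direct rewriting would at each step leave a commutator whose second argument still has conjugating word of length $|u|$, and the induction would not close to a linear bound. A minor secondary point is that $\mathcal T$ is specified as positive generators only, so occurrences of $s=t^{-1}$ in $u$ must be handled by the same conjugated-inverse detour used in the base case.
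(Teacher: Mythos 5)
The paper does not prove this lemma itself---it is imported verbatim from \cite{wang2020dehn}---so there is no in-paper argument to compare against. Judged on its own terms, your induction is correct. The two pivotal steps both check out: the decomposition $[a,b^{u's}]=[c,b^{u's}]^{a^s}\,[a^s,b^{u's}]$ with $c=a(a^s)^{-1}$ is the identity $[xy,z]=[x,z]^y[y,z]$ applied to $a=c\cdot a^s$, and the collapse $[c,b^{u's}]=[c,b]$ is legitimate because $c\in\tilde F'$, $b^{u's}b^{-1}\in\tilde F'$, and conjugation of $\tilde F'$ by elements of $\tilde F'$ is trivial in a metabelian group---this is exactly the point where the metabelian law must enter, and you place it correctly. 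The bookkeeping of relators is also right: $[c,b]$ costs two conjugates of defining relators (via $[a(a^s)^{-1},b]=[a,b]^{(a^s)^{-1}}([a^s,b]^{-1})^{(a^s)^{-1}}$ and $[a^s,b]=[a,b^{s^{-1}}]^s$, which is a conjugate of a relator or of its inverse whether $s\in\mathcal T$ or $s\in\mathcal T^{-1}$), and $[a^s,b^{u's}]=[a,b^{u'}]^s$ carries the inductive cost. Your recursion area$(|u|)\leqslant$ area$(|u|-1)+2$ with base $1$ actually yields the sharper bound $2|u|-1$, which of course implies the stated $4|u|-3$ for $|u|\geqslant 1$; the only cosmetic caveat is that the stated bound is vacuous at $|u|=0$, where the area is $1$, as you note.
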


All relations in $\tilde G$ are also represent the identity in $G$. It follows that the identity map on $\mathcal A\cup \mathcal T$ induces an epimorphism $\varphi: \tilde G\to G$. The kernel $\ker\varphi$ is a normal subgroup in $\llangle \mathcal A\rrangle_{\tilde G}$, since $\varphi$ induces an isomorphism on $T$. Let $\{f_1,f_2,\dots,f_l\}$ be a subset of $\llangle \mathcal A\rrangle_{\tilde G}$ such that $\llangle f_1,f_2,\dots,f_l\rrangle_{\tilde G}=\ker \varphi$. Thus we obtain a relative presentation of $G$. 
\begin{align*}
	G=&\langle \tilde G\mid f_1,f_2,\dots,f_l\rangle_{\mathcal S_2}\\
	=&\langle a_1,a_2,\dots,a_m,t_1,t_2,\dots,t_k\mid f_1,f_2,\dots,f_l, a_{l(i,j)}=[t_i,t_j],\\
	&[a,b]=1,[a,b^t]=1, 1\leqslant i<j\leqslant k, a,b\in \mathcal A, t\in \mathcal T\rangle_{\mathcal S_2}.
\end{align*}

We focus on the module structure on $\llangle \mathcal A\rrangle_{\tilde G}$, which is a free $T$-module generated by the basis $\mathcal A$ \cite{wang2020dehn}. Let us define the ordered form of an element $f$, denoted by $\OF(f)$, in $\llangle \mathcal A\rrangle_{\tilde G}$. $f$ can be written as $a_1^{\alpha_1}a_2^{\alpha_2}\dots a_m^{\alpha_m}$ as a group element or $\alpha_1 a_1+\alpha_2 a_2+\dots+\alpha_m a_m$ as a module element. Let $\prec$ be the well-order we construct in \cref{prem1}. The ordered form $\OF(f)$ is of the form $a_1^{\mu_1}a_2^{\mu_2}\dots a_m^{\mu_m}$ such that 
	\begin{enumerate}[(1)]
	    \item $\mu_i\in \mathbb ZT$ for $1\leqslant i\leqslant m$, and each $\mu_i$ is of the form $\mu_i=\sum_{j=1}^{n_j} c_{ij}u_{ij}$ such that $c_{ij}\in \mathbb Z, u_{ij}\in \bar F$ and $u_{i1}\succ u_{i2}\succ \dots \succ u_{in_i}$;
		\item $f=_{\tilde G} \OF(f)$,
	\end{enumerate} 
	where 
	\[\bar F=\{t_1^{m_1}t_2^{m_2}\dots t_k^{m_k}\mid m_1,\dots,m_k\in \mathbb Z\}.\]
	It has been shown that the ordered form is well-defined and for any $f,g\in \llangle \mathcal A\rrangle_{\tilde G}$, $f=_{\tilde G}g$ if and only if $\OF(f)=_{F(\mathcal A\cup \mathcal T)} \OF(g)$ \cite{wang2020dehn}. For an element $f$ in $\llangle \mathcal A\rrangle_{G}$, we define the ordered form of $f$ by lifting $f$ to $\tilde G$. The ordered form is useful for estimating the relative area of a word.
	
Let $M$ be the free $T$-module with basis $\{a_1,a_2,\dots,a_m\}$ and $S$ be a submodule of $M$ generated by $\{f_1,f_2,\dots,f_l\}$. Then the $T$-module $A$ is isomorphic to $M/S$. We have the following connection between the Dehn function of the module $A$ and the relative Dehn function of $G$:

\begin{proposition}[Wang \cite{wang2020dehn}]
\label{relativeConnection1}
	Let $G$ be a finitely generated metabelian group and $A$ is defined as above, then 
	\[\hat\delta_A(n)\preccurlyeq \tilde \delta_G(n) \preccurlyeq \max\{\hat\delta_A^3(n^3), n^6\}. \]
\end{proposition}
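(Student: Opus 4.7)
The argument splits along the two inequalities, bridged by the identification of $\llangle \mathcal A \rrangle_{\tilde G}$ with the free $\mathbb ZT$-module $M$ on the basis $\mathcal A$, under which $A\cong M/S$ and the submodule $S$ is generated by $\{f_1,\dots,f_l\}$. Every step will consist of translating between a group-theoretic assertion (about words and conjugates of relators in the relative presentation of $G$) and a module-theoretic assertion (about $\mathbb ZT$-linear combinations of the $f_i$'s).

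\textbf{Lower bound.} Given $f\in S$ with $\|f\|\leqslant n$, I would first realize $f$ by a group word $w$ over $\mathcal A\cup \mathcal T$ of length at most $\|f\|$; this is exactly what the definition of $\|\cdot\|$ permits, since the reach term supplies the short loop in $T$ that threads the translates appearing in $f$. Because $f\in S$, the word $w$ represents $1$ in $G$, so it admits an expression $w=\prod_{j=1}^N g_j^{-1}r_j g_j$ with $N\leqslant \tilde\delta_G(n)$ and each $r_j$ among the relators of the relative presentation of $G$. Modulo the relations already holding in $\tilde G$, the only non-trivial relators are the $f_i$'s, so we may assume each $r_j=f_{i_j}^{\pm 1}$. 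Inside the free $\mathbb ZT$-module $\llangle \mathcal A\rrangle_{\tilde G}$, conjugation by $g_j$ acts simply by multiplication by the image $\bar g_j\in T$, so the above identity reads in $M$ as $f=\sum_{j=1}^N \pm \bar g_j f_{i_j}$, whence $\widehat\area_A(f)\leqslant N\leqslant \tilde\delta_G(n)$.

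\textbf{Upper bound.} For the other direction, I would take a word $w$ of length at most $n$ trivial in $G$ and reduce it in three stages while tracking relative area. First, use the commutator relations $a_{l(i,j)}=[t_i,t_j]$ together with the metabelian identities to push all occurrences of $t$'s past the $a$'s, collecting $w$ to a word $w'\in \llangle \mathcal A\rrangle_{\tilde G}$ of length $O(n^2)$ at a relative cost of $O(n^2)$. Second, bring $w'$ to its ordered form $\OF(w')=f\in M$ using the relations $[a,b^u]=1$; by \cref{relativeCommutative} each such swap has relative area linear in $|u|$, and summing over all necessary swaps gives total cost $O(n^6)$, while $\|f\|=O(n^3)$ because the reach of a support of size $O(n^2)$ inside a ball of radius $O(n)$ in $T$ is $O(n^3)$. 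Third, since $w$ was trivial in $G$, the element $f$ lies in $S$, so by definition of $\hat\delta_A$ there exist coefficients $\alpha_i\in \mathbb ZT$ with $f=\sum \alpha_i f_i$ and $\sum|\alpha_i|\leqslant \hat\delta_A(n^3)$. Expanding this identity in $\tilde G$ presents $f$ as a product of $\sum|\alpha_i|$ conjugates of the $f_i^{\pm 1}$; however, converting that product back to the unique ordered-form representative so that its equality with $f$ can be certified requires two further rounds of ordered-form rewriting, each carrying a multiplicative factor of $\hat\delta_A(n^3)$ from conjugator lengths and swap costs. Assembling the three stages yields $\tilde\delta_G(n)\preccurlyeq \max\{\hat\delta_A^3(n^3), n^6\}$.

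\textbf{Main obstacle.} The technical heart is the third stage: the cubic exponent and the additive $n^6$ record exactly the bookkeeping required to reinsert a sum of $\hat\delta_A(n^3)$ module-relator conjugates back into ordered form so that the total relative area can be read off without double-counting. The delicate point is amortization — a single stray reordering of a factor $a^{t^N}$ with a long $t$-exponent already costs area linear in $N$, and one must avoid multiplicatively compounding such costs across the $\hat\delta_A(n^3)$ insertions; I expect the bulk of the work to go into showing that the length of the intermediate ordered-form word never exceeds the cubic-in-$\hat\delta_A$ bound implied by the stated inequality.
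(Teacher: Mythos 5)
The paper does not actually prove this proposition; it is quoted from \cite{wang2020dehn} with no argument given here, so your attempt has to stand on its own. Your lower bound does: realizing $f\in S$ with $\|f\|\leqslant n$ by a word of length at most $\|f\|$, decomposing it into $N\leqslant\tilde\delta_G(n)$ conjugates of relators, and projecting into $\llangle\mathcal A\rrangle_{\tilde G}$ (where the $\tilde G$-relators vanish and conjugation factors through $T$ because the base group is abelian) correctly yields $f=\sum_i\alpha_i f_i$ with $\sum_i|\alpha_i|\leqslant N$, hence $\hat\delta_A(n)\preccurlyeq\tilde\delta_G(n)$. That half is complete.

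The upper bound, however, has a genuine gap, and it sits exactly where you point: your third stage is a description of what must be proved, not a proof. Two things are missing. First, after writing $f=\sum_i\alpha_i f_i$ with $D:=\sum_i|\alpha_i|\leqslant\hat\delta_A(n^3)$, you expand this as a product of $D$ conjugates $f_i^{\pm u}$ with $u\in\supp\alpha_i$, but you never bound $\deg\alpha_i$; without such a bound each commutation $[a,b^u]$ costs $4|u|-3$ by \cref{relativeCommutative} with $|u|$ a priori uncontrolled, so the total reordering cost is not bounded by any function of $n$ and $D$. The paper itself quotes the needed ingredient elsewhere (\cite[Lemma 6.7]{wang2020dehn}: $\deg\alpha_i\leqslant\|f\|+C\sum_i|\alpha_i|$), and some statement of this kind is indispensable to your stage three. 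Second, even granting that bound, your accounting does not visibly close: sorting $O(D)$ conjugates takes $O(D^2)$ swaps of cost $O(n^3+D)$ each, i.e.\ $O(D^3+D^2n^3)$, and $D^2n^3$ is only obviously dominated by $\max\{D^3,n^9\}$, not by the claimed $\max\{\hat\delta_A^3(n^3),n^6\}$. Since the cubic power and the $n^6$ term are the entire content of the inequality, asserting that "two further rounds of rewriting each carry a multiplicative factor of $\hat\delta_A(n^3)$" leaves the theorem unproved. (Your first two stages are fine, modulo the harmless overestimate $\|f\|=O(n^3)$ where the collected word has only $O(n)$ conjugates and $O(n^2)$ already holds.)
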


If $G$ is a semidirect product, the result can be slightly improved.

\begin{proposition}
\label{improved}
	Let $T$ be a finitely generated abelian group and let $A$ be a finitely generated $T$-module. Form the semidirect product
	\[G=A\rtimes T.\]
	Then $\delta_G(n)\preccurlyeq \max\{\hat\delta_A^3(n^2), n^3\}.$	
\end{proposition}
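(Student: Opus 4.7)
The plan is to adapt the argument behind \cref{relativeConnection1}, taking advantage of the fact that in a semidirect product $G=A\rtimes T$ the subgroup $T$ sits inside $G$ as a genuine complement, so we may choose the generating set $\mathcal A$ of $A$ so that it need not contain any of the commutators $[t_i,t_j]$ (each of which already equals the identity in $G$). The auxiliary group $\tilde G$ may then be presented with the cheap relations $[t_i,t_j]=1$ in place of $a_{l(i,j)}=[t_i,t_j]$, eliminating the cross-terms that are responsible for the weaker exponents in \cref{relativeConnection1}.

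Given a word $w$ of length $n$ with $w=_G 1$, first I would push every $\mathcal T$-letter of $w$ to the right via the tautological rewriting $ta\equiv(tat^{-1})\,t$, producing a word of the form $(\text{product of conjugates }a^{u})\cdot v$ in which each original $a$-letter carries an exponent $u$ with $|u|\leqslant n$; this stage costs no relative area. Applying \cref{relativeCommutative} to sort these conjugates into ordered form contributes at most $O(n^3)$ to the relative area (at most $n^2$ swaps of adjacent conjugates $a^u,b^v$, each of area $O(|u|+|v|)=O(n)$). The tail $v\in F(\mathcal T)$ represents the identity in $T$ and reduces to the empty word at area $O(n^2)$ using only $[t_i,t_j]=1$; this step introduces no new $a$-letters, which is precisely where the semidirect product hypothesis is used. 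The resulting ordered form is a module element $f\in M$ with at most $n$ nonzero monomials of degree at most $n$, so $\|f\|\leqslant n+2n^2=O(n^2)$. Since $f$ represents $0$ in $A$, the module Dehn function supplies a decomposition $f=\sum_j\alpha_jf_j$ in $M$ with $N:=\sum_j|\alpha_j|\leqslant\hat\delta_A(n^2)$.

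Finally I would convert this module decomposition into a sequence of relator applications in $\tilde G$: each monomial $cu$ appearing in $\alpha_j$ corresponds to inserting $c$ copies of the conjugate $f_j^u$ into the accumulated word and commuting them into position via \cref{relativeCommutative}. Summing all three contributions gives $\widetilde\area(w)\preccurlyeq\max\{\hat\delta_A^3(n^2),n^3\}$. The main obstacle is this last conversion: one must carefully track the growing group-length of the partial products $\prod_{j'\leqslant j}\alpha_{j'}f_{j'}$ and show that the total commutation cost across all $N$ insertions sums to $O(N^3)$ rather than a higher power. This is the same delicate bookkeeping used for \cref{relativeConnection1} in \cite{wang2020dehn}, but now with $N$ depending on $n^2$ rather than $n^3$, which is what sharpens the exponent.
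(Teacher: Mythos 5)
The paper never proves \cref{improved}: it is stated in the preliminaries as an imported result (alongside \cref{relativeConnection1}, attributed to \cite{wang2020dehn}), so there is no in-paper argument to compare yours against. That said, your outline is exactly the method the paper uses in the one instance it does work out in full, namely the proof of \cref{metaBaum} (which is the case $m=k=1$): freely rewrite $w$ as a product of at most $n$ conjugates $a^{u}$ with $|u|\leqslant n$, sort into ordered form via \cref{relativeCommutative} at cost $O(n^{3})$, feed the resulting module element into $\hat\delta_A$, and convert the module decomposition back into relator applications. You also correctly identify why the split hypothesis buys the improvement over \cref{relativeConnection1}: the relators $[t_i,t_j]=1$ cost area one apiece instead of spawning new $\mathcal A$-letters, so the ordered form keeps $O(n)$ monomials of degree $O(n)$ and norm $O(n^{2})$ (the $\mathrm{reach}$ term dominating when $k\geqslant 2$), rather than $O(n^{2})$ monomials and norm $O(n^{3})$. (Minor point: the $\delta_G$ in the statement should be $\tilde\delta_G$; your argument proves the relative bound, which is what the paper actually uses later.)

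The one substantive gap is the one you flag yourself, and it deserves to be made precise because the arithmetic does not close without an extra input. The cost of assembling $\prod_j f_j^{\alpha_j}$ into ordered form is of order $(\text{number of conjugates})^{2}\times(\text{maximal conjugator length})$, i.e.\ $O(N^{2}D)$ with $N=\sum_j|\alpha_j|$ and $D=\max_j\deg\alpha_j$. A minimal-area decomposition could a priori use monomials of enormous degree, making $D$ (and hence the commutation cost) uncontrolled; the paper confronts exactly this in the proof of \cref{subgroupDistortion} by invoking Lemma~6.7 of \cite{wang2020dehn}, which bounds $\deg\alpha_j$ by $\deg f$ plus $O(N)$. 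With that input $D=O(n+N)$ (note it is $\deg f\leqslant n$, not $\|f\|=O(n^{2})$, that enters here), so the conversion costs $O(N^{2}n+N^{3})=O(\max\{N^{3},n^{3}\})$ and the stated bound follows; your claimed count of $O(N^{3})$ alone is not quite right but the correction is harmless. Citing that degree bound explicitly is the missing step that turns your sketch into a proof.
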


If $G$ happens to be finitely presented, we have

\begin{theorem}
\label{relativeConnection2}
Let $G$ be a finitely presented metabelian group. Then 
	\[\tilde \delta_G(n)\preccurlyeq \delta_G(n)\preccurlyeq \max\{\tilde \delta_G^3(n^3),2^n\}.\]
\end{theorem}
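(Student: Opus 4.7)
The statement combines two inequalities; the easier direction, $\tilde\delta_G(n)\preccurlyeq \delta_G(n)$, follows from the fact that metabelian consequences form a subclass of free-group consequences. My plan is to fix a finite presentation $\mathcal P = \langle X\mid R\rangle$ of $G$ and a finite relative presentation $\mathcal P' = \langle X\mid R'\rangle_{\mathcal S_2}$ (the latter exists by Hall's theorem). Each relator $r\in R$ projects to a trivial-in-$G$ element $\bar r\in\tilde F(X)$ and therefore has some bounded relative area $C_r$; setting $C=\max_r C_r$, the projection to $\tilde F(X)$ of any free-group equation $w=\prod f_i^{-1}r_if_i$ realizing $\area(w)$ has each $\bar r_i$ replaced by a product of at most $C$ conjugates of elements of $R'$. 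This gives $\widetilde\area(w)\leqslant C\cdot \area(w)$, hence $\tilde\delta_G \preccurlyeq \delta_G$.

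For the harder direction $\delta_G(n)\preccurlyeq \max\{\tilde\delta_G^3(n^3), 2^n\}$, the plan is to route the argument through the module Dehn function $\hat\delta_A$, paralleling the semidirect-product bound of \cref{improved} but paying a single exponential price for the fact that $G$ is, in general, not a semidirect product. Given $w$ of length $\leqslant n$ with $w=_G 1$, I first rewrite $w$ inside $F(X)$ using the defining relators coming from the auxiliary group $\tilde G$ of \cref{prem2}: the commutator identities $[a,b]$, $[a,b^t]$, and the identifications $a_{l(i,j)}=[t_i,t_j]$. Invoking \cref{relativeCommutative} together with the ordered-form machinery, this turns $w$ into an element of $\llangle\mathcal A\rrangle$ whose module content is a genuine element $f$ of the submodule $S=\langle f_1,\dots,f_l\rangle$ of the free $T$-module $M$. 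Because $G$ is finitely presented, the commutator and rearrangement identities used here are consequences of the finitely many relators in $R$, each of which has area at most $2^{Cn}$ in $F(X)$; collecting these rearrangements yields a first contribution to $\area(w)$ bounded by $2^{Cn}$.

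After the rewriting stage, the remaining task is to express the module element $f$ as an $R$-linear combination of $f_1,\dots,f_l$ and to realize that module identity as a product of conjugates of the $f_i$'s in $F(X)$. Following the pattern of the proof of \cref{relativeConnection1}, one expects $\|f\|=O(n^3)$, so the module area is at most $\hat\delta_A(\|f\|)\preccurlyeq \hat\delta_A(n^3)$; translating each module summand into an honest conjugation in $F(X)$ costs a further polynomial factor (roughly quadratic in $\|f\|$) that absorbs into the cube, yielding a contribution of order $\hat\delta_A^3(n^3)$. Finally, the bound $\hat\delta_A\preccurlyeq\tilde\delta_G$ from \cref{relativeConnection1} upgrades this to $\tilde\delta_G^3(n^3)$. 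Summing the two contributions gives the desired $\delta_G(n)\preccurlyeq \max\{\tilde\delta_G^3(n^3),2^n\}$.

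The main obstacle I anticipate is ensuring that the exponential cost of the rewriting stage is paid only once, rather than being multiplied by the number of elementary rearrangements; one must collect all the metabelian-type discrepancies into a single accounting and apply the finite-presentation exponential bound to the whole collection. A secondary technical point is that the lengths of the conjugators appearing in the $S$-decomposition of $f$ must be controlled polynomially in terms of $\|f\|$ and the supports of the $f_i$, otherwise the cubic bound $\hat\delta_A^3(n^3)$ would be too optimistic; this is precisely the kind of bookkeeping that forces the exponents $3$ and the argument $n^3$ into the final estimate.
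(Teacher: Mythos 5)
First, a point of comparison: the paper contains no proof of \cref{relativeConnection2} at all --- it is stated in \cref{prem2} as a preliminary imported from \cite{wang2020dehn}, alongside \cref{relativeConnection1} and \cref{improved} --- so your proposal can only be judged on its own merits. Your first inequality $\tilde\delta_G(n)\preccurlyeq\delta_G(n)$ is handled correctly and by the standard argument: the canonical projection $F(X)\to\tilde F(X)$ carries an area-$N$ decomposition $w=\prod f_i^{-1}r_if_i$ to a product of $N$ conjugates of the images $\bar r_i$, each of which is a fixed trivial element of $\tilde F(X)$ and hence a product of at most $C$ conjugates of relative relators, giving $\widetilde\area(w)\leqslant C\cdot\area(w)$.

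The second inequality, however, has a genuine gap at exactly the point where the theorem has content. You assert that the commutator and rearrangement identities used in the rewriting stage are ``consequences of the finitely many relators in $R$, each of which has area at most $2^{Cn}$ in $F(X)$,'' and nothing in your argument justifies this. In an arbitrary finitely presented group a trivial word of length $n$ need not have area bounded by $2^{Cn}$; the claim that the words $[a,b^u]$ with $|u|\leqslant n$ --- and, more generally, the elements of the second derived subgroup of $F(X)$ that appear when a metabelian-relative filling is lifted back to the absolute free group --- admit single-exponential fillings in the finite presentation is precisely the hard part of the theorem. \cref{relativeCommutative} gives a linear bound on their \emph{relative} area only; converting that into an \emph{absolute} area bound is where finite presentability of $G$ must genuinely be used (via the Bieri--Strebel-type structure theory underlying \cite{wang2020dehn}), and your sketch never invokes it except to posit the $2^{Cn}$ estimate. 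By contrast, the obstacle you single out as the main one --- paying the exponential cost only once rather than per elementary rearrangement --- is harmless, since polynomially many exponential-cost fillings still give $\approx 2^n$ up to the equivalence $\preccurlyeq$. The detour through $\hat\delta_A$ followed by $\hat\delta_A\preccurlyeq\tilde\delta_G$ from \cref{relativeConnection1} is legitimate in principle (that inequality points the right way for an upper bound), but as written the proposal assumes the key exponential estimate rather than proving it.
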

	
\section{Estimate the Relative Dehn Function}
\label{relativeDehn4}

Computing the relative Dehn function is harder than computing the Dehn function. Many techniques are no longer useful for the relative case. For the variety of metabelian groups, fortunately, the structure of groups in it is not complicated. The key is to understand the natural module structure of a finitely generated metabelian group. 

First, let us list some known results for relative Dehn functions, they are computed by Fuh in her thesis \cite{Fuh2000}. Note that most of them only give the upper bound of the relative Dehn function.

\begin{theorem}[Fuh \cite{Fuh2000}]
\label{knownResult}
	\begin{enumerate}[(1)]
		\item The realative Dehn function of a wreath product of two finitely generated abelian groups is polynomially bounded.		
		\item The Baumslag-Solitar group $BS(1,2)$ has linear Dehn function.
		\item Let $G=\tilde{BS}(n,m)=\langle a,t \mid (a^n)^t = a^m \rangle_{\mathcal S_2}$ where $m>2, m=n+1$. Then $\tilde\delta_G(n)\preccurlyeq n^3$.
	\end{enumerate}
\end{theorem}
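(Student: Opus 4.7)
The theorem lists three independent bounds, which I would handle with different tools.

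For (1), which concerns wreath products of finitely generated abelian groups, my plan is to apply \cref{relativeConnection1} to the semidirect decomposition $A \wr T = B \rtimes T$. After passing to a finite-index subgroup to assume $T \cong \mathbb Z^k$, the base group $B = \bigoplus_{t\in T} A$, viewed as a $\mathbb ZT$-module, splits as a direct sum of copies of $\mathbb ZT$ (from the free part of $A$) and copies of $\mathbb ZT/(d_i\mathbb ZT)$ (one for each cyclic torsion summand $\mathbb Z/d_i$ of $A$). This module has a presentation whose only relators are the constant-scalar ones $\{d_i\cdot a_{m+i}=0\}$; an element $f$ lies in the relation submodule iff each torsion-direction coordinate is divisible by the corresponding $d_i$, in which case $\widehat\area(f)\le |f|$ and hence $\hat\delta_B(n)\preccurlyeq n$. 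Plugging this into \cref{relativeConnection1} yields a polynomial upper bound on $\tilde\delta_G$, establishing (1).

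For (2) and (3), \cref{relativeConnection1} yields only a high-degree polynomial bound, not sharp enough for linear or cubic area, so both require a direct argument. In each case, a trivial word $w$ determines a module element $\alpha\cdot a$ with $\alpha\in\mathbb ZT$ lying in the single-relator ideal, which translates into a functional constraint: $\alpha(2)=0$ for $BS(1,2)$, and $\alpha((n+1)/n)=0$ for $\tilde{BS}(n,n+1)$. Euclidean long division by the relator yields an explicit recursion for the quotient $\beta$: $\beta_{j-1}=2\beta_j+\alpha_j$ in case (2), and $\beta_{j-1}=((n+1)\beta_j+\alpha_j)/n$ in case (3); the divisibility condition on $\alpha$ makes each recursion integer-valued, and a telescoping estimate keeps $|\beta|$ controlled in terms of $\|\alpha\|$. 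The group-side relative area still has to charge for assembling $w$ into the module normal form $a^\alpha$ via commutator relations $[a^u,a^v]=1$ whose cost is bounded by \cref{relativeCommutative}. For $\tilde{BS}(n,n+1)$ in (3), the coarse estimate of $O(n^2)$ reorderings, each of commutator area $O(n)$, produces the cubic bound. For $BS(1,2)$ in (2), the linear target requires interleaving relator applications with local commutations so that reordering and division share work rather than compounding multiplicatively.

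The main technical obstacle is the linear bound in (2): it demands understanding the geometry of $BS(1,2)$ well enough to avoid the quadratic sorting cost inherent in naive normalization. For (3) the cubic target absorbs the coarse estimates, and for (1) the polynomial conclusion drops out of the general machinery in \cref{relativeConnection1}.
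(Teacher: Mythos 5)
First, a point of calibration: the paper does not actually prove \cref{knownResult} --- it is quoted verbatim from Fuh's thesis --- so there is no in-paper proof to compare you against. The closest internal analogues are \cref{relativeConnection1} and \cref{improved} for part (1), and the proofs of \cref{metaBaum} and \cref{relativeL2} for parts (2) and (3). Measured against those, your treatment of (1) and (3) is sound. For (1), the base group $B=\oplus_{t\in T}A$ does decompose as $(\mathbb ZT)^r\oplus\bigoplus_i \mathbb ZT/(d_i)$, membership in the relation submodule is a coefficientwise divisibility condition, and $\widehat\area(f)\leqslant \|f\|$ gives $\hat\delta_B(n)\preccurlyeq n$; feeding this into \cref{improved} (after passing to a finite-index subgroup so that $T$ is free abelian, which preserves $\tilde\delta$ up to equivalence) yields a polynomial bound. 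For (3), your scheme --- sort $w$ into the ordered form $a^\mu$ at cost $O(n^3)$ via \cref{relativeCommutative}, divide by $nt-m$ with $|\nu|$ linearly controlled since the root $(n+1)/n$ has modulus different from $1$ (\cref{moduleoverone}), then unsort --- is exactly the proof of \cref{metaBaum}, and the cubic target absorbs all the coarse estimates. (You also correctly read ``Dehn function'' in (2) as the \emph{relative} Dehn function; the ordinary Dehn function of $BS(1,2)$ is exponential.)

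The genuine gap is part (2). You correctly locate the obstruction to a linear bound in the cost of normalization, but you then only assert that ``interleaving relator applications with local commutations'' will make reordering and division share work; that assertion is the entire content of the claim, and you never supply the mechanism. Concretely: in $BS(1,2)$ one has $a=_G[a,t]$ at the cost of one relator, so by the metabelian law each commutator $[a^u,a^v]$ has relative area $O(1)$ rather than the $O(|u|+|v|)$ of \cref{relativeCommutative}; but a word of length $n$ still carries up to $n$ conjugates of $a$, and any sorting-based normalization uses on the order of $n^2$ adjacent transpositions. So the best your outline actually delivers is a quadratic bound (the analogue of \cref{metaBaumCorollary}), not a linear one. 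Getting to linear requires a further idea of the kind used in the proof of \cref{relativeL2}, where linearity is extracted from a delicate cancellation scheme exploiting $a^{-1}=a$ so that adjacent pairs of conjugates are themselves commutators and can be permuted for free; no such involution is available in $BS(1,2)$, so that argument does not transfer. As written, your proposal establishes (1) and (3) but only a quadratic upper bound in (2).
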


Now let us estimate the relative Dehn function from above for some concrete examples. By the \emph{cost} of converting $w_1$ to $w_2$ ($w_2$ to $w_1$) in $G$ we mean the relative area of $w_2^{-1}w_1$ (resp. $w_1^{-1}w_2$) in $G$. If $w_2$ happens to be the identity, then the cost of converting $w_1$ to $w_2$ coincides with the area of $w_1$. By the definition of the area, it is not hard to see that if $w_1=_G w_2 =_G w_3$ and the cost of converting $w_1$ to $w_2$, $w_2$ to $w_3$ is $N_1$ and $N_2$ respectively then the cost of converting $w_1$ to $w_3$ is at most $N_1+N_2$. Essentially, to estimate the relative Dehn function from above we need to estimate the cost of converting a word to the identity. 

To begin with, we consider the metabelianized Baumslag-Solitar group 
\[\tilde{BS}(n,m)=\langle a,t\mid (a^n)^t=a^m\rangle_{\mathcal S_2}.\]
The normal subgroup generated by $a$ is a $\mathbb Z\langle t\rangle$-module. In this case, i.e., when the module is over the ring of Laurent polynomial of one variable and is generated by one variable, the Dehn function of the module is well-studied. The following theorem from Davis and Olshanskiy \cite{davis2011Subgroup} shows that the Dehn function of a finitely generated $\langle t \rangle$-module is a polynomial. 

\begin{theorem}[Davis, Olshanskiy {\cite[Theorem 8.6]{davis2011Subgroup}}]
\label{moduleoverone}
	Let $M=\langle a\rangle$ is the free module of rank one over the group ring $\mathbb Z\langle t\rangle$. Let $f=h(t)a$ where $h(x)$ is a polynomial of the form $d_nx^n+d_{n-1}x^{n-1}+\dots+d_0$. Then the Dehn function of the $\langle t\rangle$-module $M/\langle f\rangle$ is a polynomial. Furthermore, the degree of this polynomial is exactly one plus the maximal multiplicity of a (complex) root of $h(x)$ having modulus one.
\end{theorem}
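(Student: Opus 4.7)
The plan is to identify $M/\langle f\rangle$ with the quotient ring $R/(h)$, where $R = \mathbb Z[t, t^{-1}]$, via $a \mapsto 1$. Under this identification, $p(t)a$ is trivial iff $h(t) \mid p(t)$ in $R$, in which case the quotient $q(t) = p/h$ is unique and $\widehat\area(p(t)a) = |q|$. The Dehn function thus becomes
\[\hat\delta_A(n) \;=\; \max\bigl\{\,|q| : p = qh,\ |p| + \operatorname{reach}(p) \leqslant n\,\bigr\},\]
and the task is to bound the L1-norm of $p/h$ in terms of $|p|$ and the support-width of $p$. The whole analysis hinges on the linear recursion encoding division in the Laurent ring.

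For the upper bound I would write $h(x) = \sum_{j=0}^D h_j x^j$. The identity $p = qh$ translates into the scalar recursion $p_k = \sum_j h_j q_{k-j}$, which is solved for $q_{k-D}$ in terms of $p_k$ and the $q_{k-j}$ with $j < D$. The associated homogeneous recurrence $\sum_j h_j q_{k-j} = 0$ has characteristic roots with the same moduli and multiplicities as those of $h$, and its general solution is $\sum_i P_i(k)\zeta_i^{-k}$ with $\deg P_i < m_i$. Estimating the forced solution by its Green's function, roots of modulus $\neq 1$ contribute geometrically decaying kernels (absorbed into a bounded multiplicative constant), while a unit-modulus root $\zeta_i$ of multiplicity $m_i$ contributes polynomial growth of degree $m_i - 1$ in the index. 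Setting $K = \max_{|\zeta_i| = 1} m_i$, one obtains $|q_k| \lesssim w^{K-1}\,|p|$, and summing over the $\leqslant w$ indices in $\supp q$ gives $|q| \lesssim w^K |p| \leqslant C\, n^{K+1}$.

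For the matching lower bound I would exhibit explicit witnesses. In the archetypal case $h(x) = (x-1)^K$, take $p(x) = (1 - x^N)^K$: then $|p|$ is bounded and $\operatorname{reach}(p) = O(N)$, while $p/h = (1 + x + \cdots + x^{N-1})^K$ has L1-norm $N^K$, forcing $\hat\delta_A(n) \succcurlyeq n^{K+1}$. For a general root $\zeta$ of modulus one and multiplicity $K$ (possibly irrational or complex), substitute $(1 - \zeta^{-N} x^N)^K$ over $\mathbb C$ and then multiply through by the minimal $\mathbb Z$-polynomial of $\zeta^N$; the extra factor has L1-norm bounded independent of $N$, so the same asymptotics persist with integral coefficients.

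The main obstacle is obtaining the exponent $K+1$ rather than the naive $D+1$ coming from sequentially dividing out one linear factor at a time. When $h$ has several distinct unit-circle roots, a factor-by-factor synthetic division over-counts and produces $n^{D+1}$ instead of $n^{K+1}$; the resolution is to analyze the \emph{single} linear recurrence for $q$ globally, reading the correct exponent $K$ off the maximal-multiplicity pole of the rational function $1/h(x)$ on the unit circle (equivalently, off the polynomial degree of the dominant homogeneous mode). Matching this sharp exponent against the explicit lower-bound witness is the delicate core of the theorem.
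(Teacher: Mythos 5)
First, a point of reference: the paper does not prove this statement at all --- it is imported verbatim from Davis--Olshanskiy \cite{davis2011Subgroup} and used as a black box --- so there is no in-paper argument to compare yours against, and I am judging the proposal on its own merits. Your setup is correct: identifying $M/\langle f\rangle$ with $R/(h)$, noting that $\widehat\area(pa)=|q|$ for the unique $q$ with $p=qh$ (uniqueness because $R$ is a domain), and reducing everything to bounding $|p/h|$ in terms of $|p|$ and the support width $w$. The upper bound via the fundamental solution of the recursion $p_k=\sum_j h_jq_{k-j}$ is also the right mechanism, with one caution: the chain ``$|q_k|\lesssim w^{K-1}|p|$, then sum over $\leqslant w$ indices'' is too crude when $K=0$, where it yields $w|p|\approx n^2$ rather than the linear bound the theorem asserts when $h$ has no unit-modulus roots --- and that is precisely the case the paper invokes for $\tilde{BS}(n,m)$ with $n\neq m$. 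You should instead bound $\sum_{|m|\leqslant 2w}|g_m|$ for the Green's function $g$ directly: $O(1)$ from the geometrically decaying modes and $O(w^K)$ from the unit-circle modes.

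The genuine gaps are in the lower bound. (i) Your witness for $h=(x-1)^K$ is off by one degree: $p=(1-x^N)^K$ has $\|pa\|=O(N)$ and quotient $\pm(1+x+\dots+x^{N-1})^K$ of $\ell^1$-norm $N^K$, so it forces only $\hat\delta\succcurlyeq n^K$, not $n^{K+1}$. You are not spending the coefficient budget $|p|\leqslant n$ at all; a correct witness is, e.g., $p=(1+x+\dots+x^{N-1})(1-x^N)^K$, which still has $|p|=O(N)$ and $\mathrm{reach}=O(N)$ but quotient $\pm(1+x+\dots+x^{N-1})^{K+1}$ of norm $N^{K+1}$. (ii) More seriously, the reduction to a general unit-modulus root $\zeta$ fails as stated: the minimal $\mathbb Z$-polynomial of $\zeta^N$ does \emph{not} have $\ell^1$-norm bounded independently of $N$ in general. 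If $\zeta$ has a Galois conjugate $\eta$ with $|\eta|>1$ --- as happens for every unit-circle root of a Salem polynomial, say --- then the minimal polynomial of $\zeta^N$ has $\eta^N$ among its roots, hence Mahler measure and $\ell^1$-norm at least of order $|\eta|^N$, which destroys the length estimate for your witness; a similar failure occurs when $\zeta$ is not an algebraic integer (e.g.\ $\zeta=(3+4i)/5$). Producing integer polynomials of length $O(N)$, divisible by $h$, with quotient of norm $\succcurlyeq N^{K+1}$ for such $\zeta$ is the delicate core of the Davis--Olshanskiy argument, and the proposal as written establishes the matching lower bound only when the maximal-multiplicity unit-circle root is a root of unity.
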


Thus we have
 
\begin{proposition}
\label{metaBaum}
The metabelianized Baumslag-Solitar group $\tilde{BS}(n,m)=\langle a,t\mid (a^{n})^{t}=a^m\rangle_{\mathcal S_2}$ has at most cubic relative Dehn function when $n\neq m$ and has at most quartic relative Dehn function when $n=m$.
\end{proposition}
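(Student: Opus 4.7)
The plan is to transform the question about the relative Dehn function of $\tilde{BS}(n,m)$ into a Dehn function computation for the underlying module, via \cref{moduleoverone}. I identify the module structure, compute its Dehn function, and then bound the relative Dehn function of the group in terms of it.

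Taking $T = \langle t\rangle \cong \mathbb Z$, the normal closure $A = \llangle a\rrangle$ is a cyclic $\mathbb Z\langle t\rangle$-module presented by the single relation $(nt-m)\cdot a = 0$, so $A \cong \mathbb Z\langle t\rangle/(nt-m)$. The defining Laurent polynomial is $h(t) = nt - m$, with unique complex root $t = m/n$. Applying \cref{moduleoverone}: when $n \neq m$ (with $n,m$ positive integers), $|m/n|\neq 1$, so no root of $h$ lies on the unit circle and $\hat\delta_A$ is a polynomial of degree $1+0=1$, i.e., linear. When $n = m$, $h(t) = n(t-1)$ has the simple root $1$ on the unit circle with multiplicity $1$, so $\hat\delta_A$ is a polynomial of degree $1+1=2$, i.e., quadratic.

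To bound the relative Dehn function, let $w$ be a word of length $\ell$ in $\{a,t\}^{\pm}$ with $w =_{\tilde{BS}(n,m)} 1$. In the first stage, I use the commutator relations $[a^{t^i},a^{t^j}]=1$ to rewrite $w$ in its ordered form $a^{f(t)}$, where $f \in \mathbb Z\langle t\rangle$ has $\|f\|=O(\ell)$. By \cref{relativeCommutative} each basic commutation costs $O(\ell)$ in relative area, and $O(\ell^2)$ commutations suffice to sort, contributing a total of $O(\ell^3)$. In the second stage, since $f$ lies in the ideal $(nt-m)\mathbb Z\langle t\rangle$, I write $f = \alpha(nt-m)$ with $|\alpha| \leq \hat\delta_A(\|f\|) \leq \hat\delta_A(C\ell)$; decomposing $a^{f(t)}$ into the corresponding conjugates of the module relator $a^{nt-m}$, while keeping the intermediate expression in ordered form at each step, incurs an additional cost bounded by $O(\ell^2 \hat\delta_A(\ell))$, where the $\ell^2$ factor absorbs the re-ordering work after each insertion of a conjugate. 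Combining the two stages gives $\tilde\delta_{\tilde{BS}(n,m)}(\ell) \preccurlyeq \max\{\ell^3,\, \ell^2 \hat\delta_A(\ell)\}$, which is $O(\ell^3)$ when $\hat\delta_A$ is linear (case $n \neq m$) and $O(\ell^4)$ when $\hat\delta_A$ is quadratic (case $n = m$).

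The main obstacle is the bookkeeping in the second stage. Each conjugate $u^{-1}a^{nt-m}u$ inserted into the running ordered word disturbs the ordering, and one must argue that returning to ordered form between successive insertions contributes at most $O(\ell^2)$ per insertion, so that the cumulative second-stage cost is genuinely $O(\ell^2 \hat\delta_A(\ell))$ and not larger. The support of $\alpha$ is controlled by the reach used in the definition of $\|f\|$, which keeps the conjugators $u$ of length $O(\ell)$ throughout, and this is what prevents the re-organisation cost from blowing up.
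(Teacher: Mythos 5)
Your proposal follows the same two-stage strategy as the paper's proof: first convert $w$ to its ordered form $a^{\mu}$ at cost $O(\ell^{3})$ using \cref{relativeCommutative}, then use the Dehn function of the module $\mathbb Z\langle t\rangle/(nt-m)$ --- read off from the unit-circle roots of $nt-m$ via \cref{moduleoverone} --- to write $\mu=\alpha(nt-m)$ with $|\alpha|\leqslant\hat\delta_A(O(\ell))$, and finally re-sort the resulting product of conjugates of the relator. For $n\neq m$ your argument is complete and matches the paper's: there are $O(\ell)$ conjugates of $a$ to rearrange, each adjacent transposition has relative area $O(\ell)$ by \cref{relativeCommutative}, and the crude count (number of conjugates squared, times $O(\ell)$ per transposition) already gives $O(\ell^{3})$, so your finer per-insertion bookkeeping is not even needed there. (Both you and the paper implicitly assume $m/n>0$, so that $n\neq m$ forces the root $m/n$ off the unit circle; $m=-n$ would land in the quadratic-module case.)

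The gap is in the case $n=m$, and you have correctly located it yourself. There $|\alpha|=O(\ell^{2})$, so $(a^{nt-m})^{\alpha}$ contains $O(\ell^{2})$ conjugates of $a$, and the crude count gives $O(\ell^{4})\cdot O(\ell)=O(\ell^{5})$, not $O(\ell^{4})$. Your claimed $O(\ell^{2})$ re-ordering cost per inserted copy of the relator is exactly what is needed, but it is asserted rather than proved: a newly inserted letter $a^{\pm t^{q}}$ must be moved past position-blocks $a^{ct^{p}}$ of the running ordered word, and deriving $[a^{t^{q}},a^{ct^{p}}]=1$ from the relators $[a,a^{t^{u}}]=1$ costs, on the face of it, $O(|c|\,|p-q|)$; summing over the $O(\ell)$ blocks, whose multiplicities total $O(\ell^{2})$, gives $O(\ell^{3})$ per insertion and $O(\ell^{5})$ overall. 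To close this you would need either a derivation of $[a^{c},a^{d t^{u}}]=1$ whose area does not grow multiplicatively in $|c|$ and $|d|$, or a different order of operations (for instance, peeling conjugates of the relator off $a^{\mu}$ one at a time while keeping the word ordered). Be aware that the paper's own treatment of $n=m$ (``following the same process'') is equally terse on precisely this point, so the issue is inherited rather than introduced; but as written your quartic bound is not established.
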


\begin{proof}
	Note that in this case we have $|\mathcal A|=|\mathcal T|=1$, which simplifies the process a lot. Given a word $w=_G 1$ of length $l$. 
	
	First we claim that converting $w$ to the ordered form $\OF(w)$ (defined in \cref{prem2}) costs at most $(4l-3)l^2$. 
	
	Suppose $w=t^{n_1}a^{m_1}t^{n_2}a^{m_2}\dots t^{n_s}a^{m_s}t^{n_{s+1}}, n_i, m_i\in\mathbb Z$ for all $i$ and only $n_1,n_{s+1}$ can be zero. Since $w=1$, we have that 
	\[\sum_{i=1}^{s+1} n_i=0, \sum_{i=1}^{s}(|n_i|+|m_i|)+|n_{s+1}|=l.\]
	The first conditions comes from the fact that the image of $w$ is 0 in $T$ and the second condition comes from $|w|=l$.
	
	Thus we can rewrite $w$ by inserting trivial words $tt^{-1}$ to the form 
	\[w=a^{t^{-n_1}}a^{t^{-(n_1+n_2)}}\dots a^{t^{-(n_1+n_2+\dots+n_s)}}=a^\mu,\]
	where $\mu=t^{-n_1}+t^{-(n_1+n_2)}+\dots+t^{-(n_1+n_2+\dots+n_s)}\in \mathbb ZT.$ We immediately have that $\deg(\mu)\leqslant l$, $|\mu|=s\leqslant l$, and $\|w\|\leqslant l$ by the definition. 
	
	The cost of converting $w$ to $a^{t^{-n_1}}a^{t^{-(n_1+n_2)}}\dots a^{t^{-(n_1+n_2+\dots+n_s)}}$ is zero since we only insert trivial words in the absolute free group. 
	
	Next we will convert $a^{t^{-n_1}}a^{t^{-(n_1+n_2)}}\dots a^{t^{-(n_1+n_2+\dots+n_s)}}$ to the ordered form of $w$. To do this, we have to rearrange conjugates of $a$ such that exponents are ordered by $\prec$ from high to low. In order to commute two conjugates in $w$, we have to insert commutator of the form
	\[[a^{t^{-l_i}}, a^{t^{-l_j}}]=[a,a^{t^{l_i-l_j}}], \text{ where }l_i=n_1+n_2+\dots+n_i.\]
	By \cref{relativeCommutative}, the area is bounded by $4l-3$. To rearrange $s$ conjugates of $a$ we need to insert at most $s^2$ many such commutators. Thus the cost of converting $w$ to $\OF(w)$ is bounded by $(4l-3)l^2$. The claim is proved.
	
	Suppose $OF(w)=a^{\mu}$, where $|\mu|\leqslant l,\deg \mu\leqslant l$. We can conjugate $w$ by $t^l$ such that $\mu$ only have positive powers of $t$. Thus we assume that $|\mu|\leqslant l,\deg \mu \leqslant 2l$. Further, the length of $\mu$ is bounded by $l$ by definition.
	
	In this case, the module $A$ is isomorphic to $M/S$ where $M$ is a free $T$-module with basis $a$ and $S$ is its submodule generated by $\{(nt-m)a\}$. Consider the polynomial ring $R=\mathbb Z[t,t^{-1}]$ and its ideal $I=\langle nt-m,tt^{-1}-1\rangle$. We have that $A\cong R/I$. The Gr\"{o}bner basis of $I$ is $\{tt^{-1}-1, nt-m, mt^{-1}-n\}$. If we regard $\mu$ as an element in $I$, it can only be reduced by $nt-m$ since it only has positive power of $t$. It follows that there exists a polynomial $\nu$, which only consists of the power of $t$, such that 
	\[\mu=(nt-m)\nu.\]
	This equality also holds in the polynomial ring $\mathbb Z[t]$. When $n\neq m$, the Dehn function of $\langle t\rangle$-module $\mathbb ZT/\langle nt-m\rangle$ is linear, by \cref{moduleoverone}. Thus there exists $C$ such that $|\nu|\leqslant C\|\mu\|+C$. We have that 
	\[a^\mu=_G(a^{mt-n})^{\nu}.\]
	The area of the right hand side is at most $Cl+C$. Converting the right hand side to its ordered form costs at most $(4l-3)((m+n)(Cl+C))^2$ since the degree is less than $l$ and we have $(m+n)(Cl+C)$ many conjugates to rearrange. Thus the upper bound of $\widetilde \area(w)$ is at most $l^3$ up to equivalence when $n\neq m$.
	
	When $n=m$, the Dehn function of $\langle t\rangle$-module $\mathbb ZT/\langle nt-m\rangle$ is quadratic. Following the same process, we have that the upper bound of $\widetilde \area(w)$ is at most $l^4$ up to equivalence when $n\neq m$. This finishes the proof.
\end{proof}

For the case $n=1$, the group $\tilde BS(1,n)\cong BS(1,n)$ is finitely presented. Following from \cref{relativeConnection2}, the Dehn function of $BS(1,n)$ is at most exponential. We will extend this idea of using relative Dehn function to estimate the Dehn function in the next Section.

One special case Fuh \cite[Theorem 6.1]{Fuh2000} concerned is when $m>2, m=n+1$. In this case, we have that $a=[a^n,t]$. Since $a$ itself is a commutator, it follows that the relative area of words like $[a^{t^k},a]$ is at most 4 instead of linearly depending on $k$. Therefore we can improve the result in \cite[Theorem 6.1]{Fuh2000} by the following corollary of \cref{metaBaum}.

\begin{corollary}
\label{metaBaumCorollary}
	The metabelianized Baumslag-Solitar group $\tilde{BS}(n,m)=\langle a,t\mid (a^{n})^{t}=a^m\rangle_{\mathcal S_2}, m>2, m=n+1$ has at most quadratic relative Dehn function.
\end{corollary}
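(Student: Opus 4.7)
The plan is to upgrade the proof of \cref{metaBaum} by exploiting the special structure that $m = n+1$ forces on the generator $a$. When $m = n+1$, the defining relation $(a^n)^t = a^m$ rearranges (at constant relative cost) to $a = (a^n)^t \cdot a^{-n} = [a^n, t]$, so $a$ is itself a commutator of the generators. The goal will be to use this to replace the factor $4|u|-3$ from \cref{relativeCommutative} by a universal constant wherever it appears in the proof of \cref{metaBaum}.

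First I would show that for every $k \in \mathbb Z$ the relative area of $[a^{t^k}, a]$ is bounded by a constant independent of $k$. The equality $[a^n, t]^{t^k} = [(a^n)^{t^k}, t]$ holds freely as a reduction in the absolute free group on $\{a,t\}$, so, combining with $a = [a^n, t]$,
\[[a^{t^k}, a] \;=\; \bigl[[(a^n)^{t^k}, t],\, [a^n, t]\bigr] \;=\; 1,\]
where the final equality is an instance of the metabelian law $[[x_1,y_1],[x_2,y_2]] = 1$ and therefore costs nothing in the variety $\mathcal S_2$. Each of the two substitutions $a \leftrightarrow [a^n, t]$ invokes the defining relation only a bounded number of times, so the total relative area is at most a universal constant (specifically $4$, matching the remark preceding the statement).

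Next I would rerun the argument of \cref{metaBaum} verbatim, but applying this constant-cost commutation estimate in place of \cref{relativeCommutative}. Both stages of that proof — collecting the input word $w$ of length $l$ into the ordered form $a^\mu$, and then collapsing the factored ordered form coming from the module identity $\mu = (nt - m)\nu$ — require $O(l^2)$ pairwise commutations of conjugates of $a$. Each such commutation now contributes only $O(1)$ to the relative area rather than $O(l)$, so both stages cost $O(l^2)$ in total, and summing gives $\widetilde\area(w) \preccurlyeq l^2$, which is the desired quadratic bound.

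The main obstacle I foresee is verifying cleanly in step one that no hidden linear dependence on $k$ leaks in. One has to confirm that the conjugation $[a^n, t]^{t^k} \to [(a^n)^{t^k}, t]$ is a genuine free-reduction identity in the absolute free group (and hence truly free in the relative presentation), and that the metabelian law may be instantiated on commutators of arbitrary length without any cost. Once these points are checked, the remainder is a direct quantitative rerun of \cref{metaBaum}.
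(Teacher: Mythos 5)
Your proposal is correct and follows essentially the same route as the paper: the paper's justification is precisely that $m=n+1$ gives $a=[a^n,t]$, so each commutation $[a^{t^k},a]$ has relative area at most $4$ (independent of $k$), and rerunning the proof of \cref{metaBaum} with this constant in place of the $4|u|-3$ bound from \cref{relativeCommutative} drops both stages to $O(l^2)$. Your explicit verification that $[a^{t^k},a]=\bigl[[(a^n)^{t^k},t],[a^n,t]\bigr]$ reduces to an instance of the metabelian law is exactly the intended (and correct) mechanism.
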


The lamplighter groups are another interesting class of infinite presented metabelian groups with a simple module structure. We have

\begin{proposition}
\label{lamplighter}
	The lamplighter groups $L_m, m\geqslant 2$ have at most cubic relative Dehn function. 
\end{proposition}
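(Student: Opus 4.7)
The group $L_m$ admits the metabelian relative presentation $\langle a,t\mid a^m\rangle_{\mathcal S_2}$, and the normal closure $\llangle a\rrangle$ is a cyclic $\mathbb Z\langle t\rangle$-module whose single defining relator corresponds to $h(t)\cdot a$ with $h(x)=m$. Since the constant polynomial $m$ has no complex roots, \cref{moduleoverone} gives that the module Dehn function is linear. The plan is to follow the argument of \cref{metaBaum} in the case $n\neq m$: given a word $w$ of length $l$ with $w=_{L_m}1$, I bring $w$ into ordered form at cost $O(l^3)$, then reduce the resulting ordered form to the identity using the module-level relation $\mu=m\nu$.

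For the first stage, write $w=t^{n_1}a^{m_1}\cdots a^{m_s}t^{n_{s+1}}$ with $\sum_i n_i=0$ and $\sum_i(|n_i|+|m_i|)\leq l$. Inserting trivial syllables $t^kt^{-k}$ at no cost expresses $w$ as a product of at most $l$ single-letter conjugates of $a^{\pm 1}$ by monomials $t^{k_i}$ with $|k_i|\leq l$. Rearranging these conjugates in $\prec$-decreasing order and combining like terms requires at most $l^2$ commutator insertions of the form $[a^{t^i},a^{t^j}]=[a,a^{t^{j-i}}]^{t^i}$; each has relative area at most $4l-3$ by \cref{relativeCommutative} (since $|j-i|\leq l$), so this stage costs $O(l^3)$ and produces the ordered form $a^\mu$ with $\mu\in\mathbb Z\langle t\rangle$ satisfying $|\mu|\leq l$. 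Conjugating by a suitable power of $t$ further lets us assume $\mu$ has nonnegative $t$-support of degree $\leq 2l$.

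For the second stage, since $a^\mu=_{L_m}1$ every coefficient of $\mu$ must be divisible by $m$, so $\mu=m\nu$ with $|\nu|\leq|\mu|/m\leq l$. Writing $\nu=\sum_j c_j u_j$ with distinct monomials $u_j$ in $\prec$-decreasing order, the ordered form of $a^\mu=a^{m\nu}$ is literally the word $\prod_j u_j^{-1}(a^m)^{c_j}u_j$ in the absolute free group $F(\{a,t\})$, which is manifestly a product of $|\nu|\leq l$ conjugate applications of the relator $a^m$. Hence Stage 2 contributes only $O(l)$ to the relative area, and adding the two stages yields $\widetilde{\area}(w)=O(l^3)$, proving $\tilde\delta_{L_m}(n)\preccurlyeq n^3$. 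The main technical point to verify carefully is the commutation bookkeeping of Stage 1, which is identical to the corresponding step in the proof of \cref{metaBaum}; the lamplighter case is in fact slightly easier than the Baumslag-Solitar case because the relator $a^m$ corresponds to a single monomial in the module, so unlike in \cref{metaBaum} no additional rearrangement of conjugates of $a$ is required in Stage 2.
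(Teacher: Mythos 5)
Your overall strategy is exactly the paper's: the paper's proof of \cref{lamplighter} simply says ``the rest is the same as the proof of \cref{metaBaum}, with the submodule generated by $\{m\}$,'' and your two-stage argument (ordered form at cost $O(l^3)$ via \cref{relativeCommutative}, then the module relation $\mu=m\nu$) is a correct filling-in of that. Your Stage 2 observation that $\OF(w)=\prod_j u_j^{-1}a^{mc_j}u_j$ is already, verbatim, a product of $|\nu|$ conjugates of the relator $a^{\pm m}$ --- so that, unlike in \cref{metaBaum}, no second rearrangement is needed --- is correct and a nice simplification.

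There is, however, one genuine error: the presentation $\langle a,t\mid a^m\rangle_{\mathcal S_2}$ is not a presentation of $L_m$. The relator $[a,a^t]=1$ must be included, as in the paper's presentation $\langle a,t\mid a^m=1,\,[a,a^t]=1\rangle_{\mathcal S_2}$. In the free metabelian group on $\{a,t\}$ one has $[a,a^t]=[a,[a,t]]$, which corresponds to the module element $(1-a)[a,t]$ of $F'/F''\cong \mathbb Z[a^{\pm 1},t^{\pm 1}]\cdot[a,t]$, while $\llangle a^m\rrangle\cap F'/F''$ is contained in the submodule $(1+a+\dots+a^{m-1})\cdot F'/F''$; evaluating at a primitive $m$-th root of unity shows $(1-a)$ is not a multiple of $(1+a+\dots+a^{m-1})$, so $[a,a^t]\neq 1$ in $\langle a,t\mid a^m\rangle_{\mathcal S_2}$, i.e.\ that group is a proper preimage of $L_m$ with non-abelian base. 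This is not merely cosmetic for your argument: Stage 1 invokes \cref{relativeCommutative} to bound the area of the commutators $[a,a^{t^{j-i}}]$ by $4|j-i|-3$, and that lemma produces these as consequences of the relators $[a,b]=1$, $[a,b^t]=1$ --- which must therefore be present in the presentation. (Two harmless slips elsewhere: since the exponents $k_i$ satisfy $|k_i|\leqslant l$ you only get $|j-i|\leqslant 2l$, so the per-commutator bound is $8l-3$ rather than $4l-3$; and after conjugating to make the support nonnegative the resulting word is a conjugate of $w$, which changes the relative area not at all --- neither affects the cubic bound.) With the relator $[a,a^t]=1$ restored, your proof is correct and coincides with the paper's.
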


\begin{proof}
	Consider the lamplighter group $L_m$ with the standard presentation. 
	\[L_m=\langle a,t\mid a^m=1, [a,a^{t^n}]=1, n\in\mathbb N\rangle.\]
	By the discussion in \cref{prem2}, we have a finite relative presentation as the following
	\[L_m=\langle a,t\mid a^m=1, [a,a^t]=1\rangle_{\mathcal S_2}.\]
	The rest of the proof is the same as the proof of \cref{metaBaum}. The only difference is that in this case the submodule is generated by $\{m\}$.
\end{proof}

This slightly improves the estimation in \cite[Theorem B2]{Fuh2000}.

In particular, when $m=2$, for the case of $L_2$, we are able to improve the upper bound to linear. 

\begin{proposition}
\label{relativeL2}
	The lamplighter groups $L_2$ has linear relative Dehn function.
\end{proposition}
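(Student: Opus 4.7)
The plan is to refine the argument of \cref{lamplighter}, using $a^2=1$ at every step to avoid the cubic commutation cost. Given $w$ of length $l$ with $w =_{L_2} 1$, I would first rewrite $w$ in base-group form
\[w = a^{t^{p_1}} \cdots a^{t^{p_s}} \cdot t^N\]
at zero relative cost by inserting trivial $t^{\pm 1}$ pairs, exactly as in the proof of \cref{metaBaum}. Since $w =_{L_2} 1$, we have $N=0$ and the product equals the identity in $B\cong \mathbb{F}_2[t,t^{-1}]$; in particular each integer appears with even multiplicity among the $p_i$, and $s\le l$, $|p_i|\le l$.

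Two tools are available for trivialising the base-group product: commuting $a^{t^p}\,a^{t^q} \leftrightarrow a^{t^q}\,a^{t^p}$ at relative cost $4|p-q|-3$ by \cref{relativeCommutative}, and cancelling adjacent $a^{t^p}\,a^{t^p}$ at cost $1$ via $(a^2)^{t^p}$. The quadratic bottleneck in the proof of \cref{lamplighter} comes from using the first tool $\Theta(s^2)$ times; in $L_2$ the idea is to lean on the second tool so heavily that the first is only needed $O(l)$ times in total. I would pair the $a$'s at each height in order of appearance (first with second, third with fourth, and so on) and then cancel the pairs in a bottom-up pass governed by the excursion decomposition of the $t$-projection of $w$. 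When a pair at height $p$ becomes ``innermost''---i.e., once all strictly-nested inner pairs have been cancelled---its two $a^{t^p}$'s are adjacent, and it is eliminated at cost $1$ via $(a^2)^{t^p}$.

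The main obstacle I anticipate is controlling the residual commutation cost arising from ``crossing'' pairs at different heights, for which the bottom-up order alone does not render the two members adjacent. I expect an amortised analysis on the excursion tree of the walk to show that each unit of walk length is charged by only $O(1)$ commutations across all pair cancellations; this would bound the total commutation cost by $O(l)$. Combined with the $\le l/2$ trivial $(a^2)^{t^p}$ cancellations, this yields $\widetilde\area(w) = O(l)$, and hence $\tilde\delta_{L_2}(n) \preccurlyeq n$.
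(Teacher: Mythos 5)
Your setup (rewriting $w$ at zero cost as a product of conjugates $a^{t^{p_1}}\cdots a^{t^{p_s}}$ and then cancelling equal heights using $a^2=1$) matches the paper's starting point, but the amortised analysis you defer to is exactly where the argument breaks, and it cannot be repaired with the two moves you allow yourself, namely transposing two \emph{individual} conjugates $a^{t^p},a^{t^q}$ at cost $\Theta(|p-q|)$ via \cref{relativeCommutative} and cancelling an adjacent equal pair at cost $1$. Consider the walk $0\to1\to\cdots\to r\to r-1\to\cdots\to1\to2\to\cdots\to r$ with an $a$ deposited at each of the heights $1,\dots,r$ on each of the two ascents: this is a word of length $\Theta(r)$ equal to $1$ in $L_2$ whose height sequence is $1,2,\dots,r,1,2,\dots,r$. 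Every matched pair of equal heights crosses every other matched pair, so before a pair can be cancelled one of its members must be transposed past conjugates at different heights; already making the two $1$'s (or the two $r$'s) adjacent forces one conjugate past the $r-1$ conjugates at heights $2,\dots,r$, at cost $\sum_{j=2}^{r}\bigl(4(j-1)-3\bigr)=\Theta(r^2)=\Theta(l^2)$, and no bottom-up or excursion-governed ordering avoids this because the obstruction is the fully crossing chord diagram, not the order of cancellation. So the residual commutation cost you hope to amortise to $O(l)$ is genuinely quadratic for your move set.

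The idea you are missing, and the one the paper's proof turns on, is that $a^{-1}=a$ makes a product of two \emph{consecutive} conjugates itself a commutator,
\[
a^{t^{s}}a^{t^{u}}=\bigl(a\,a^{t^{u-s}}\bigr)^{t^{s}}=[a,t^{u-s}]^{t^{s}},
\]
hence an element of the derived subgroup of the relatively free group; since the metabelian law makes the derived subgroup abelian, any two such pairs commute at \emph{zero} relative cost. This upgrades the move set: blocks of two conjugates shuffle freely, and only transpositions of single conjugates are ever charged. The paper then runs a parity bookkeeping ($\sigma(m_i,m_j)=|\iota(m_i)-\iota(m_j)|\bmod 2$) showing that equal values an even distance apart cancel for free, and that the remaining cancellations can be scheduled (always treating the first two surviving entries) so that the total charged cost is $\sum_i|m_{i+1}-m_i|=\sum_i|n_i|<n$. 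In the example above this handles all the crossings at no cost, which is why the paper gets a linear bound where your scheme cannot.
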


\begin{proof}
	The linear lower bound is given by \cref{moduleoverone}. 
	
	We choose the following relative presentation of $L_2$:
	\[L_2=\langle a,t\mid a^2=1, [a,a^t]=1\rangle_{\mathcal S_2}.\]
	For the upper bound, consider a word $w\in L_2$ that represents the identity. Thus $w$ has the form 
	\[w=t^{n_1}at^{n_2}a^{n_3}\dots t^{n_{2k}}at^{n_{2k+1}}, \text{ where }n_2,n_3,\dots,n_{2k}\neq 0.\]
	Suppose the length of $w$ is $n$, combining the fact that $w=1$, we have
	\[2k+\sum_{i=1}^{2k+1}|n_i|=n, \sum_{i=1}^{2k+1} n_i=0.\]
	Inserting $tt^{-1}$ or $t^{-1}t$, we can rewrite $w$ as the following form:
	\[w=a^{t^{-n_1}}a^{t^{-(n_1+n_2)}}\dots a^{t^{-(n_1+n_2+\dots+n_{2k})}}.\]
	Thus $w$ represents an element in $\oplus_{i\in \mathbb Z}\mathbb Z_2$, where the $a^{t^i}$ is the generator of the $i$-th copy of $\mathbb Z_2$. Since $w=1$, then every element in the set $\{-n_1,-(n_1+n_2),\dots,-(n_1+n_2+\dots+n_{2k})\}$ occurs even many times in the sequence $-n_1,-(n_1+n_2),\dots,-(n_1+n_2+\dots+n_{2k})$. Our goal is to gather the conjugates of $a$ of the same exponents together at a linear cost with respect to $n$.
	
	Since $a^{-1}=a$, we notice that 
	\[a^{t^s}a^{t^l}=(aa^{t^{l-s}})^{t^s}=[a,t^{l-s}]^{t^s}, l,s\in\mathbb Z.\]
	Thus any pair of two consecutive conjugates of $a$ is a commutator. It follows that any such pair commutes with any other pair of this form without any cost inside the variety of metabelian groups.
	
	For convenience, let $m_i=\sum_{i=1}^{2k} -n_{i}$. We now turn the problem of estimating the relative area of $w$ to a problem of cancelling numbers in a sequence and estimating the cost. Consider a sequence of number 
	\[m_1,m_2,\dots,m_{2k}.\]
	The goal is to cancel all the pairs of the same value. We have three operations allowed:
	\begin{enumerate}[(i)]
		\item Cancel two consecutive numbers of the same value without any cost. 
		\item Commute a pair of consecutive numbers with another pair of consecutive numbers next to it without any cost.
		\item Commute two consecutive numbers $c,d$ with a cost of $|c-d|$.
	\end{enumerate}
	
	Applying all three operations to the original sequence many times, the result might seems chaotic. To analyze the process, for a sequence of numbers, we define the $\iota(m_i)$ be the position of $m_i$ in the sequence. At the beginning, $\iota(m_i)=i$. Then we define $\sigma(m_i,m_j)=|\iota(m_i)-\iota(m_j)| \mod 2$. So $\sigma(m_i,m_j)=0$ if $m_i$ and $m_j$ are even positions apart and $\sigma(m_i,m_j)=1$ if $m_i$ and $m_j$ are odd positions apart. We notice that 
	\begin{enumerate}[(a)]
		\item operations from (i) and (ii) do not change $\sigma(m_i,m_j)$;
		\item if $m_i$ is next to $m_j$, applying the operation (iii) to commute $m_i$ and $m_j$ will change all values of $\sigma(m_i,m_l), \sigma(m_j,m_l)$ for $l\neq i ,j$ but all other values of $\sigma$ remain the same. 
	\end{enumerate}
	
	From the above observation, we have that 
	\begin{enumerate}[(1)]
		\item if $\sigma(m_i,m_j)=0$ and $i<j$, $m_j$ can be moved to the position next to $m_i$ just using operations from (ii).
		\item if $\sigma(m_i,m_j)=0$, $i<j$ and $m_i=m_j$, then $m_i$ and $m_j$ can be cancelled using just operations from (i) and (ii).
		\item for $m_i,m_j,m_l$ such that $m_i=m_j$, $\sigma(m_i,m_j)=1,\sigma(m_i,m_l)=0$, we can cancel $m_i,m_j$ with the cost of $|m_i-m_l|$.
	\end{enumerate}
	(1) can be achieved by commuting two consecutive pairs of numbers. (2) is a direct consequence of (1). Let us show how to achieve (3). By (1), we can move $m_l$ next to $m_i$. Then by using operation (ii), the pair $m_im_l$ (or $m_lm_i$) can be moved to the position next to $m_j$, resulting the form of $m_im_lm_j$ or $m_jm_lm_i$. Finally, we commute $m_i$ and $m_j$ using operation (iii) at a cost of $|m_i-m_j|$ and cancel $m_im_j$.
	
	Now we are ready to estimate the cost to cancel the sequence $m_1,m_2,\dots,m_{2k}$ to the empty sequence. By $(2)$, we can assume that we have already cancelled all the pairs $m_i,m_j$ where $\sigma(m_i,m_j)=0, m_i=m_j$ using operations (i) and (ii). This step costs nothing and does not change any $\sigma(m_i,m_j)$ for $m_i,m_j$ remaining in the resulting sequence. Let the remaining elements after cancellations be $m_{i(1)}, m_{i(2)},\dots, m_{i(4s)}$ for some $2s\leqslant k$ and $i(1)<i(2)<\dots<i(4s)$. The reason we will have even pairs numbers left is that if the number of pairs is odd, there must be a pair of number of the same value that is even positions apart. The remaining sequence satisfies the following properties: 
	\begin{enumerate}[(a)]
		\item $\sigma(m_{i(s)},m_{i(l)})=i(s)-i(l) \mod 2$,
		\item if $m_{i(s)}=m_{i(l)}$ then $\sigma(m_{i(s)},m_{i(l)})=1$,
		\item $\sigma(m_{i(s)},m_{i(l)})=\sigma(m_{i(s')},m_{i(l')})$ for $m_{i(s)}=m_{i(s')},m_{i(l)}=m_{i(l')}$.
	\end{enumerate}
	Here the property (a) is true because in the original sequence $\iota(m_{i(s)})=i(s)$ and we only use operation (i) and (ii) which do not change $\sigma(m_{i(s)},m_{i(l)})$. (b) and (c) follow from the definition of $\sigma$ and the remaining sequence.
	\begin{figure}[H]
		\centering
			\includegraphics[width=3.5cm]{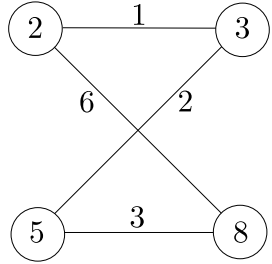}
			\caption{the corresponding graph of the sequence 2,3,5,3,5,8,2,8}
	\end{figure}
	We define the weighted graph $\Gamma_0$ associated with $(m_{i(1)}, m_{i(2)},\dots, m_{i(4s)})$ where the vertex set is $\{m_{i(1)}, m_{i(2)},\dots, m_{i(4s)}\}$ and there is an edge with weight $|m_{i(s)}-m_{i(l)}|$ connects $m_{i(s)},m_{i(l)}$ if $\sigma(m_{i(s),i(l)})=0$. Note that this graph is invariant under operations (ii) and may have multi-edge.
	
	By (3), we are allowed to cancel $m_{i(s)},m_{i(l)}$ at a cost of $|m_{i(s)}-m_{i(j)}|$ for some $m_{i(j)}$ that $\sigma(m_{i(s)},m_{i(j)})=0$. After the cancellation, since we use operation (iii) once, $\sigma(m_{i(j)},m_{i(j')})$ change to 0 for some $m_{i(k')}$ that $m_{i(j)}=m_{i(j')}$. Therefore we can then cancel $m_{i(j)},m_{i(j')}$ without any cost. In summary, we have
	\begin{enumerate}[(1)]
	  \setcounter{enumi}{3}
	  \item for $m_{i(s)} \neq m_{i(j)}$ that $\sigma(m_{i(s)},m_{i(j)})=0$, we can cancel a pair of number $m_{i(s)}$ and a pair of number $m_{i(j)}$ at a cost of $|m_{i(s)}-m_{i(j)}|$ where $\sigma(m_i,m_j)$ remains the same for numbers that have not been cancelled.
	\end{enumerate}
	(4) will delete an edge of $(m_{i(s)},m_{i(j)})$ in the graph. If no edges connecting $m_{i(s)}$ and $m_{i(j)}$, we delete the two vertices $m_{i(s)}, m_{i(j)}$. The cost is the weight of that edge. Let $\mathcal C$ be a cancellation of $\Gamma_0$ where $\mathcal C$ consists of an ordered sequence of edges in $\Gamma_0$, where we cancel the edge by the order of the sequence. Thus the total cost of a cancellation $\mathcal C$ to the empty graph is just a sum of the weight of edges in $\mathcal C$. Every cancellation can be associated with a path $p_\mathcal C$ where the path passes through the sequence of edges in $\mathcal C$ in the same order. 
	
	Now we delete edges in the following way. We first delete one edge $(m_{i(1)},m_{i(2)})$ since $\sigma(m_{i(1)},m_{i(2)})=0$. We let the resulted graph to be $\Gamma_1$. 
	\begin{figure}[H]
		\centering
			\includegraphics[width=10cm]{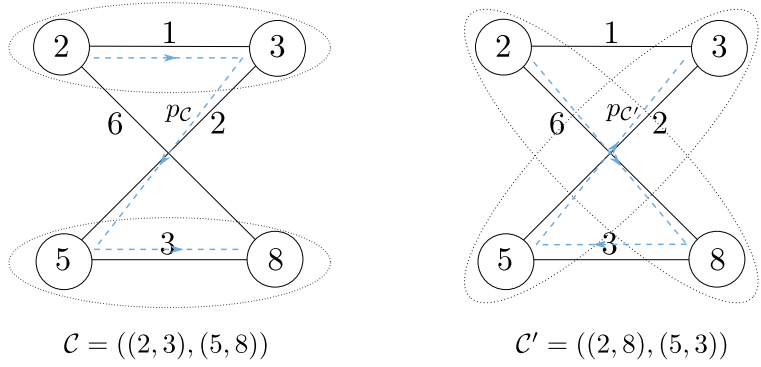}
			\caption{two different cancellations $\mathcal C,\mathcal C'$ and their corresponding $p_{\mathcal C},p_{\mathcal C'}$. The total cost of $\mathcal C$ is 3 and the total cost of $\mathcal C'$ is 8.}
	\end{figure}
	Inductively, $\Gamma_{i+1}$ is obtained by deleting an edge $(m_{i(s)},m_{i(j)})$ where $i,j$ are the smallest numbers remained in $\Gamma_i$. $\Gamma_s$ will be an empty graph since every time we delete four numbers from the sequence. 
	
	Let us estimate the cost from $\Gamma_0$ to $\Gamma_s$. Since every time we cancel pairs of numbers based on the order of the original sequence $m_1,m_2,\dots,m_{2k}$ (always cancel the first two numbers remained). The cost is bounded by 
	\[\sum_{i=1}^{2k-1} |m_{i+1}-m_i|=\sum_{i=2}^{2k} |n_i|<n.\]
	Let inequality can also be realized by the following interpretation: the sequence $m_{i(1)}, m_{i(2)},\dots, m_{i(4s)}$ defines a path $p$ in $\Gamma_0$ (since $\sigma(m_{i(i)},m_{i(j+1)})=0$) that $p(j)=m_{i(j)}$, the weight of the path $p$ is bounded by $n$ by the definition of $m_i$. $p$ happens to be the path associated with this cancellation. It follows that the cost of the cancellation is bounded by the total weight of $p$. Thus the total cost is bounded by $n$.
	
	By \cref{relativeCommutative}, the total cost of converting 
	\[a^{t^{m_1}}a^{t^{m_2}}\dots a^{t^{m_{2k}}}\]
	to $0$ is bounded by $4n-3$. We finish the proof.\end{proof}

\section{Relative Dehn Functions and Subgroup Distortions}
\label{relativeDehn5}

So far for all the examples considered in \cite{Fuh2000} and \cref{relativeDehn4}, only the upper bounds of their relative Dehn functions are estimated. Similar to the case of the Dehn function, it is genuinely much harder to estimate the lower bound. In this section, we will connect the relative Dehn function of a finitely generated metabelian group to the subgroup distortions in a wreath product of two free abelian groups. This connection provides a new method to estimate the lower bound for the relative Dehn function and yields a sequence of examples of finitely generated metabelian groups with relative Dehn function larger that $n^k$ for arbitrary $k\in \mathbb N$.

Let $G$ be a finitely generated group with a finite generating set $X$ and $H$ be a a subgroup of $G$ with finite generating set $Y$. The \emph{distortion function} of $H$ in $G$ is 
\[\Delta_H^G(n)=\sup\{|w|_Y\mid w\in H, |w|_X\leqslant n\}.\]

For example, the subgroup $\langle a \rangle$ in the Baumslag-Solitar group $\langle a,t\mid a^t=a\rangle$ is exponentially distorted since $a^{t^n}=a^{2^n}$. And it not hard to check that infinite subgroups of a finitely generated abelian group are undistorted. 

Let $A$ and $T$ be free abelian groups with bases $\{a_1,a_2,\dots,a_m\}$ and $\{t_1,t_2,\dots,t_k\}$ respectively. Consider the wreath product $W:=A\wr T$. The base group $B:=\llangle A\rrangle$ is a $T$-module. For a finite subset $\mathcal X=\{f_1,f_2,\dots,f_l\}$ of $B$, let $H$ be the subgroup of $W$ generated by $\mathcal X\cup\{t_1,t_2,\dots,t_k\}$ and $G$ be the group $W/\llangle \mathcal X\rrangle$. We denote by $\pi: W\twoheadrightarrow T$ the canonical quotient map. 

\begin{theorem}
	\label{subgroupDistortion}
	Let $W,H,G$ be groups defined as above, then 
	\[\Delta_{H}^W(n) \preccurlyeq \tilde \delta_G^k(n)+n^k,\tilde\delta_G(n)\preccurlyeq\max\{n^3, (\Delta_{H}^W(n^2))^3\}.\]
	In particular, if $k=1$, 
	\[\Delta_{H}^W(n) \preccurlyeq \tilde \delta_G(n).\]
\end{theorem}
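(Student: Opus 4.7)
Both inequalities pass through the module Dehn function $\hat\delta_A$ of the $T$-module $A=B/\langle\mathcal X\rangle_T$, which is the abelian part of the semidirect decomposition $G=A\rtimes T$ obtained by pushing $\llangle\mathcal X\rrangle$ out of $W=B\rtimes T$.

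\emph{Upper bound on $\tilde\delta_G$.} The plan is to first establish $\hat\delta_A(n)\preccurlyeq\Delta_H^W(n)$ and then feed this into the semidirect-product version of \cref{relativeConnection1} (in the spirit of \cref{improved}), which gives $\tilde\delta_G(n)\preccurlyeq\max\{\hat\delta_A^3(n^2),n^3\}$. For the module bound, take $f\in S=\langle\mathcal X\rangle_T$ with module length $\|f\|\leq n$. I realize $f$ as a $W$-word of length $\leq n$ by walking a loop in the Cayley graph of $T$ of length $\mathrm{reach}(f)$ visiting every point of $\cup_i\mathrm{supp}(\mu_i)$ and depositing the appropriate $a_i^{\mu_i(\tau)}$ at each visited vertex. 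Since $f\in H\cap B$, the distortion gives an $H$-word for $f$ of length at most $\Delta_H^W(n)$; reading each letter $f_j^{\pm1}$ (conjugated by the running product of the $t$'s) as a module term $\pm\tau f_j$ produces an expression $f=\sum\beta_if_i$ with $\sum|\beta_i|\leq\Delta_H^W(n)$. Combining then yields $\tilde\delta_G(n)\preccurlyeq\max\{\Delta_H^W(n^2)^3,n^3\}$.

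\emph{Lower bound on $\Delta_H^W$.} Take $h\in H$ realized by a $W$-word $w$ of length at most $n$, and decompose $h=bt$ where $t=\pi(h)\in T$ satisfies $|t|_T\leq n$ and $b\in H\cap B=\llangle\mathcal X\rrangle_W$. The word $wt^{-1}$ has length at most $2n$ and represents the identity in $G$, so by definition of the relative Dehn function one may write
\[wt^{-1}=\prod_{i=1}^{N}v_i^{-1}r_i^{\epsilon_i}v_i\quad\text{in }\tilde F(\{a_j,t_j\})/\mathcal S_2\]
with $N\leq\tilde\delta_G(2n)$ and each $r_i$ a relator of $G$. The $W$-relators among the $r_i$ become trivial upon passing to $W$ itself, which yields an equation $b=\prod_{j=1}^{N'}u_j^{-1}f_{l_j}^{\epsilon_j}u_j$ in $W$ with $N'\leq N$. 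Because $B$ is abelian, each such conjugate depends only on $\tau_j:=\pi(u_j)\in T$, and the equation collapses to the module identity $b=\sum_j\epsilon_j\tau_jf_{l_j}$ with at most $N$ terms.

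To bound $|b|_H$, I read this expression back as a word in the $H$-generators by traversing the Cayley graph of $T=\mathbb Z^k$ through the multiset $\{\tau_j\}$ and depositing one $f_{l_j}^{\epsilon_j}$ at each visit. The number of deposits is $N$, and the walking cost is controlled by a tour through $N$ points lying in a ball of radius $R$ in $\mathbb Z^k$, which is $O(R^k+N)$ by a standard grid-filling argument. The main technical obstacle is controlling $R=\max|\tau_j|_T$, since the relative-area bound counts only the number of conjugates, not the lengths of conjugators. My plan is to tighten the expression so that any $\tau_j$ lying farther from $\mathrm{supp}(b)$ than $O(n+\tilde\delta_G(n))$ must be paired with an opposite shift of the same $f_{l_j}$ and that such cancelling pairs may be removed without increasing the conjugate count; this forces $R\preccurlyeq n+\tilde\delta_G(n)$. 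Combining gives $|b|_H\preccurlyeq\tilde\delta_G(n)^k+n^k$, and adding $|t|_H\leq n$ finishes the bound. In the case $k=1$ the tour through a ball of radius $R$ in $\mathbb Z$ has length only $O(R)$, and the inequality $\tilde\delta_G(n)\geq n$ absorbs the $n$ term, giving $\Delta_H^W(n)\preccurlyeq\tilde\delta_G(n)$.
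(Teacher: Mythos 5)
Your overall architecture matches the paper's: both directions are routed through the Dehn function of the module $A=B/\llangle\mathcal X\rrangle$, with the bound $\hat\delta_A(n)\preccurlyeq\Delta_H^W(n)$ fed into \cref{improved} for the upper bound on $\tilde\delta_G$, and the reverse comparison (module area at most relative area, essentially \cref{relativeConnection1}) plus a reach/tour estimate in $\mathbb Z^k$ for $\Delta_H^W(n)\preccurlyeq\tilde\delta_G^k(n)+n^k$. Your argument for $\hat\delta_A\preccurlyeq\Delta_H^W$, the decomposition $h=bt$, the conversion of a relative van Kampen expression into a module identity with at most $\tilde\delta_G(2n)$ terms, and the $O(R^k+N)$ tour estimate are all sound and agree with what the paper does (the paper expresses $|b|_H=\sum_i|\alpha_i|+\mathrm{reach}(b)$ by citing \cite[Theorem 3.4]{davis2011Subgroup}, which you re-derive by hand).

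The one genuine gap is exactly the step you flag yourself: controlling $R=\max_j|\tau_j|_T$. The paper does not reprove this; it cites \cite[Lemma 6.7]{wang2020dehn}, which gives $\deg(\alpha_i)\leqslant|g|_W+C\sum_i|\alpha_i|$ for a minimal expression, i.e.\ precisely $R\preccurlyeq n+\tilde\delta_G(n)$. Your proposed mechanism --- that a far-away $\tau_j$ ``must be paired with an opposite shift of the same $f_{l_j}$'' and that such pairs can be deleted --- is not correct as stated: the mass deposited on $\tau_j\supp(f_{l_j})$ can be cancelled by a combination of translates of \emph{different} generators (already $\tau f_2-\tau f_1-\tau f_1=0$ with $f_1=a_1$, $f_2=2a_1$ defeats the pairing), and deleting an unpaired far term changes the element represented. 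The correct version of your idea works at the level of connected components of $\cup_j\tau_j\supp(f_{l_j})$ (joining points within a bounded distance): a component not meeting $\supp(b)$ carries a sub-sum equal to zero and can be deleted wholesale, contradicting minimality of the number of terms, while the component meeting $\supp(b)$ has diameter $\preccurlyeq n+CN$ because it is connected and consists of at most $N$ translates of sets of bounded diameter. Either supply that argument or cite \cite[Lemma 6.7]{wang2020dehn} as the paper does; with that repaired your proof is complete.
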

	
\begin{proof}
	First we show the following lemma.
	
	\begin{lemma}
		let $M$ be the $T$-module $B/\llangle \mathcal X \rrangle$. Then $\hat\delta_M(n)\preccurlyeq\Delta_H^W(n)\preccurlyeq \hat\delta_M^k(n)+n^k$. 

	\end{lemma}
	
	\begin{proof}
	Let $g\in H$. Note that $g$ can be written as $g_0t$, by adding $t:=\pi(g)$ to the end, where $g_0\in B, t\in T$. Since $|\pi(t)|_T \leqslant |g|_W\leqslant n$, $|g_0|_W\leqslant 2|g|_W$. Thus, we have
	\[|g|_H=|g_0t|_H\leqslant |g_0|_H+|t|_H\leqslant |g|_W+|g_0|_H.\]
	Assume that the ordered form of $OF(g_0)$ is $a_1^{\mu_1} a_2^{\mu_2}\dots a_m^{\mu_m}$, let us estimate $|g_0|_H$. First note that $\deg \mu_i\leqslant |g|_W$ for all $i$. Let $\alpha_1,\alpha_2,\dots,\alpha_l$ be elements in $\mathbb Z T$ such that $g_0=f_1^{\alpha_1}f_2^{\alpha_2}\dots f_l^{\alpha_l}$ and $\sum_{i=1}^l |\alpha_i|$ is minimized. By Theorem 3.4 in \cite{davis2011Subgroup}, 
	\[|g_0|_H=\sum_{i=1}^l |\alpha_i|+\mathrm{reach}(g_0),\]
	where $\mathrm{reach}(g_0)$ is the length of the shortest loop starting at 0 in the Cayley graph of $T$ that passing through all points in the set $\cup_{i=1}^l \supp \alpha_i$. By \cite[Lemma 6.7]{wang2020dehn}, for all $i$, $\deg (\alpha_i)\leqslant |g|_W+C\sum_{i=1}^l |\alpha_i|$ for some constant $C$. It follows that $\cup_{i=1}^l \supp \alpha_i$ lies in Ball $B_0(|g|_W+C\sum_{i=1}^l |\alpha_i|)$ of radius $|g|_W+C\sum_{i=1}^l |\alpha_i|$ centered at 0 in the Cayley graph of $T$. Since there exists a path of length $(2(|g|_W+C\sum_{i=1}^l |\alpha_i|)+1)^k$ passing through all the points in $B_0(|g|_W+C\sum_{i=1}^l |\alpha_i|)$, 
	\[\mathrm{reach}(g_0) \leqslant (2(|g|_W+C\sum_{i=1}^l |\alpha_i|)+1)^k.\] 
	Therefore, we have
	\[\sum_{i=1}^l |\alpha_i|\leqslant |g|_H\leqslant |g|_W+\sum_{i=1}^l |\alpha_i|+2^k(|g|_W+C\sum_{i=1}^l |\alpha_i|)^k.\]
	Since $\sum_{i=1}^l |\alpha_i|=\widehat \area_M(g_0)$ by definition and $\|g_0\|\leqslant 2|g|_W$, we have the following estimation:
	\[\widehat \area(2|g|_W)\leqslant |g|_H \leqslant |g|_W+\widehat \area(2|g|_W)+2^k(|g|_W+C\widehat \area(2|g|_W))^k.\]
	\end{proof}
	
	By \cref{relativeConnection1}, we have
	\[\Delta_{H}^W(n) \preccurlyeq \tilde \delta_G^k(n)+n^k.\]
	Last, by \cref{improved}, 
	\[\tilde\delta_G(n)\preccurlyeq\max\{n^3, (\Delta_{H}^W(n^2))^3\}.\]
\end{proof}

\cref{subgroupDistortion} connects the subgroup distortion function and the relative Dehn function, as it provides a way to estimate the relative Dehn function from the bottom. One special case is that both $A$ and $T$ are free abelian group of rank 1. Davis and Olshanskiy \cite{davis2011Subgroup} show that subgroups in $W=\langle a\rangle \wr \langle t\rangle$ have polynomial distortion functions and moreover for each $l\in \mathbb Z$, a subgroup of the form $H_l:=\langle [\dots,[a,t],t],\dots,t],t \rangle$, where the commutator is $(l-1)$-fold, is isomorphic to $\mathbb Z\wr \mathbb Z$ with $n^l$ distortion. It follows immediately that 

\begin{corollary}
\label{relativeLowerbound}
	Let $W=\langle a\rangle \wr \langle t \rangle$ be the wreath product of two infinite cyclic group. For each $l\in \mathbb N$, let $w_l=[\dots,[a,t],t],\dots,t]$ be the $(l-1)$-fold commutator. Finally let $H_l=W/\llangle w_l\rrangle$. Then we have
	\[\tilde \delta_{H_l}\succcurlyeq n^l.\]
\end{corollary}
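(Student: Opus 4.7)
The plan is to apply \cref{subgroupDistortion} in the special case $k=1$ and then invoke the Davis--Olshanskiy distortion computation cited immediately above the corollary. The corollary is essentially a direct translation between the subgroup-distortion side and the relative-Dehn-function side of that theorem.

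First I would verify that the set-up of \cref{subgroupDistortion} applies. Set $A=\langle a\rangle$, $T=\langle t\rangle$, so $W=A\wr T$ with base group $B$ a $T$-module and $k=1$. The iterated commutator $w_l=[\dots,[a,t],t],\dots,t]$ is a commutator in $W$, hence lies in $[W,W]\leqslant B$, so $\mathcal X:=\{w_l\}$ is a legitimate choice of finite subset of $B$. With this choice, the group denoted $H$ in \cref{subgroupDistortion} is precisely $\langle w_l,t\rangle\leqslant W$, and the group denoted $G$ there is exactly the quotient $W/\llangle w_l\rrangle=H_l$ of the corollary. Now by the Davis--Olshanskiy computation quoted just above the corollary, $\langle w_l,t\rangle\cong \mathbb Z\wr\mathbb Z$ and its distortion inside $W$ satisfies $\Delta_{\langle w_l,t\rangle}^W(n)\asymp n^l$.

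Second I would invoke the $k=1$ conclusion of \cref{subgroupDistortion}, namely
\[\Delta_{\langle w_l,t\rangle}^W(n)\preccurlyeq \tilde\delta_{H_l}(n).\]
Chaining this with the Davis--Olshanskiy lower bound $n^l\preccurlyeq \Delta_{\langle w_l,t\rangle}^W(n)$ yields $n^l\preccurlyeq \tilde\delta_{H_l}(n)$, which is exactly $\tilde\delta_{H_l}\succcurlyeq n^l$. Passing between the distortion equivalence $\asymp$ and the Dehn-function equivalence $\approx$ is harmless here because the target function $n^l$ is superlinear, so the extra $+Cn+C$ in $\preccurlyeq$ is absorbed.

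There is no real obstacle; the only point worth checking is that the specific element used by Davis--Olshanskiy to exhibit an $n^l$-distorted copy of $\mathbb Z\wr\mathbb Z$ in $W$ is indeed the $(l-1)$-fold commutator $w_l$ together with $t$, which is exactly what they analyze. Everything else is bookkeeping with the two defined equivalence relations.
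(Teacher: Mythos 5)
Your proposal is correct and matches the paper's own (implicit) argument: the paper derives the corollary "immediately" by combining the $k=1$ case of \cref{subgroupDistortion} with the Davis--Olshanskiy computation that $\langle w_l,t\rangle\cong\mathbb Z\wr\mathbb Z$ has distortion $n^l$ in $W$. Your added care in distinguishing the subgroup $\langle w_l,t\rangle$ from the quotient $H_l=W/\llangle w_l\rrangle$ and in reconciling the $\asymp$ and $\preccurlyeq$ equivalences is sound and only makes the argument more explicit.
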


Let us consider the case when the rank of $T$ is 1, that is, when $k=1$. The following result characterizes the distortion function of subgroups when $k=1$.

\begin{theorem}[Davis, Olshanskiy, {\cite[Theorem 1.2]{davis2011Subgroup}}]
\label{subgroupoverZ}
	Let $A$ be a finitely generated abelian group.
	\begin{enumerate}[(1)]
		\item For any finitely generated infinite subgroup $H\leqslant A\wr \mathbb Z$ there exists $l\in \mathbb N$ such that the distortion of $H$ in $A\wr \mathbb Z$ is 
		\[\delta_{H}^{A\wr \mathbb Z}(n)\asymp n^l.\]
		\item If $A$ is finite, then $l=1$; that is, all subgroup are undistorted.
		\item If $A$ is infinite, then for every $l\in \mathbb N$, there is a 2-generated subnormal subgroup $H$ of $A\wr \mathbb Z$ having distortion function
		\[\Delta_H^{A\wr \mathbb Z}(n)\asymp n^l.\]
	\end{enumerate}	
\end{theorem}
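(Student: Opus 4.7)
My plan is to handle the three parts in order, exploiting the module structure of $B=\bigoplus_{n\in\mathbb Z}A^{t^n}$ over the Laurent polynomial ring $R=\mathbb Z\langle t\rangle$ together with the Davis--Olshanskiy length formula $|g|_W=|\mu|_R+\mathrm{reach}(g)$ for $g\in B$ with module representative $\mu$, exactly as was already used in the proof of \cref{subgroupDistortion}.

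\emph{Setup common to all three parts.} Given a finitely generated subgroup $H\leqslant W=A\wr\mathbb Z$, let $d\geqslant 0$ generate $\pi(H)\leqslant\mathbb Z$ and set $H_0=H\cap B$. If $d\geqslant 1$, pick $\tau\in H$ with $\pi(\tau)=t^d$; then $H=H_0\rtimes\langle\tau\rangle$, and $H_0$ is a finitely generated module over $R_d=\mathbb Z\langle t^d\rangle$. The length formula applied inside $H$ (with generators $\tau$ together with a chosen finite $R_d$-generating set $\{\phi_1,\dots,\phi_s\}$ of $H_0$) shows $|h_0|_H\asymp\sum_j|\beta_j|+\mathrm{reach}_{R_d}\bigl(\sum_j\beta_j\phi_j\bigr)$ where the sum is minimized; the length formula in $W$ is the analogous expression in the original module structure on $B$. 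The distortion question therefore reduces to a quantitative membership problem: given $h_0\in H_0$, how much longer (as $R_d$-combinations of the $\phi_j$) must an expression be than the $R$-module length of $h_0$ in $B$?

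\emph{Part (1).} I would prove the polynomial bound by bounding this quantitative membership problem. The ring $R_d$ is a Noetherian Laurent polynomial ring in one variable, and $H_0$ is a finitely generated $R_d$-submodule of the finitely generated $R_d$-module $B_0:=R_d\cdot\{a_1,\dots,a_m, t a_1,\dots, t^{d-1}a_m\}\cong B$; hence the quotient $B_0/H_0$ has polynomial module Dehn function by \cref{moduleoverone} (applied coordinate-wise) combined with standard Gr\"obner basis reductions over $\mathbb Z[t,t^{-1}]$. Transporting this polynomial bound through both length formulas yields $|h_0|_H\preccurlyeq (|h_0|_W)^l$ for some $l$, and combining with $|\tau^n|_H\asymp n\asymp|\tau^n|_W$ gives the polynomial distortion bound. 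The integer $l$ arises as the exponent of the module Dehn function, determined by the maximum multiplicity of a modulus-one root of a generator of the relevant Fitting ideal.

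\emph{Part (2).} When $A$ is finite, $B$ is locally finite, so $H_0\leqslant B$ is infinite only if $d\geqslant 1$ (else $H$ is finite). I would argue directly: an element $h_0\in H_0$ of $W$-length at most $n$ has $B$-support in an interval of length $O(n)$, and because $A$ is finite only finitely many distinct elements of $H_0$ have support inside any window of $d$ consecutive positions. Thus $h_0$ is encoded by a sequence of $O(n/d)$ letters in a finite alphabet depending only on $H_0$; I can read this sequence off and realize $h_0$ as a product of $O(n/d)$ conjugates of the $\phi_j$ by powers of $\tau$, each of bounded $H$-cost, giving $|h_0|_H=O(n)$. Combined with the trivial estimate $|\tau^k|_H=|k|$, this yields linear distortion, so $l=1$.

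\emph{Part (3).} For infinite $A$, it suffices (by projecting to a free summand) to construct the examples when $A=\mathbb Z$. I would take the subnormal series $W\rhd\llangle a\rrangle\rhd\llangle[a,t]\rrangle\rhd\cdots$ and define $H_l:=\langle t,w_l\rangle$ with $w_l=[[\cdots[a,t],t],\cdots,t]$ an $(l-1)$-fold commutator, so that $w_l$ corresponds to the module element $(t-1)^{l-1}a\in B$ and $H_l\cap B$ is the principal $R$-submodule $(t-1)^{l-1}R\cdot a$. For the upper bound $\Delta_{H_l}^W(n)\preccurlyeq n^l$, I would apply the polynomial ring argument from Part~(1). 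For the matching lower bound I would exhibit a sequence of witnesses: the element $f_N:=(t-1)^{l-1}(1+t+\cdots+t^{N-1})^{l-1}a$ collapses to $(t^N-1)^{l-1}a$, which has $W$-length $O(N)$ (only two monomials appearing after expansion, with reach $O(N)$), while any representation inside $H_l$ requires the cofactor $(1+t+\cdots+t^{N-1})^{l-1}$ of $R$-norm $\asymp N^{l-1}$ applied to $w_l$, giving $H_l$-length $\asymp N\cdot N^{l-1}=N^l$.

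\emph{Main obstacle.} The genuinely delicate step is the lower bound in Part~(3): I must rule out that some clever alternative $R$-cofactor $f$ with $(t-1)^{l-1}f=(t^N-1)^{l-1}$ (up to an element already short in $H_l$) could have much smaller combined $|f|_R+\deg f$ than the naive choice. I would approach this by evaluating $f$ (and its iterated discrete derivatives) at $t=1$ and at primitive roots of unity, extracting coefficient estimates that force $|f|_R\succcurlyeq N^{l-1}$; this is the content of the ``multiplicity at modulus-one roots'' phenomenon appearing in \cref{moduleoverone}, which I would have to quantify as a genuine lower bound rather than merely an upper bound.
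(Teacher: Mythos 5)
The paper itself contains no proof of this statement: \cref{subgroupoverZ} is imported verbatim from Davis--Olshanskiy \cite[Theorem 1.2]{davis2011Subgroup} and is used as a black box (in the proof of \cref{overZ}), so there is no in-paper argument to compare against; your proposal has to be judged against the cited work. Your overall frame is the right one and matches theirs: split $H=H_0\rtimes\langle\tau\rangle$, view $H_0$ as a finitely generated module over $\mathbb Z\langle t^d\rangle$, and reduce distortion to a quantitative membership problem via the length formula $|g_0|_H=\sum_i|\alpha_i|+\mathrm{reach}(g_0)$ (the same Theorem 3.4 of \cite{davis2011Subgroup} that the paper invokes inside the proof of \cref{subgroupDistortion}).

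However, there are concrete gaps. First, in Part (1) you lean on \cref{moduleoverone} ``applied coordinate-wise,'' but that theorem covers only cyclic modules with a single defining relation; over $R=\mathbb Z[t,t^{-1}]$ a finitely generated submodule of a free module is in general \emph{not} a direct sum of such cyclic pieces ($R$ is not a PID), and Gr\"obner reduction by itself gives no polynomial control on $\ell^1$-norms of coefficients --- closing exactly this gap is the technical bulk of Davis--Olshanskiy, so your Part (1) assumes what must be proved. Second, and most concretely, your Part (3) lower bound is numerically wrong: since $R$ is a domain and $H_l\cap B$ is the free cyclic module $(t-1)^{l-1}R\,a$, the cofactor of your witness $f_N=(t^N-1)^{l-1}a$ is literally unique, namely $\beta=(1+t+\dots+t^{N-1})^{l-1}$, and by the \emph{additive} length formula $|f_N|_{H_l}\asymp|\beta|+\mathrm{reach}\asymp N^{l-1}+N$; costs add, they do not multiply, so your ``$N\cdot N^{l-1}=N^l$'' is an error and this witness only certifies distortion $\succcurlyeq n^{l-1}$. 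The fix is to take $f_N=(t^N-1)^{l-1}(1+t+\dots+t^{N-1})a$, which still has $W$-length $O(N)$ (submultiplicativity of $|\cdot|$ applied to $(t^N-1)^{l-1}$ times the geometric sum) while its unique cofactor $(1+t+\dots+t^{N-1})^{l}$ has $\ell^1$-norm $N^{l}$. Note also that your announced ``main obstacle'' (ruling out clever alternative cofactors via evaluations at roots of unity) is vacuous for the same domain/uniqueness reason. Finally, in Part (2) you conflate encoding with factorization: the restriction of $h_0$ to a window of $d$ positions generally does \emph{not} lie in $H_0$ (e.g.\ $H_0=(1+t)R\,a$ over $A=\mathbb Z/2$, $h_0=a+t^Na$), so the claimed product decomposition into $O(n/d)$ bounded-cost pieces needs an actual division-with-remainder or telescoping argument over $(\mathbb Z/m)[t]$, and the case $\pi(H)=0$ should be treated separately.
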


It follows that

\begin{theorem}
	\label{overZ}
	Let $G$ be a finitely generated metabelian group such that $rk(G)=1$. Then the relative Dehn function of $G$ is polynomially bounded. If in addition $G$ is finitely presented, the Dehn function of $G$ is asymptotically bounded above by the exponential function.
\end{theorem}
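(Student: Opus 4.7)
The plan is to chain together the $k=1$ clause of \cref{subgroupDistortion}, Davis--Olshanskiy's classification of subgroup distortions in $A\wr\mathbb Z$ from \cref{subgroupoverZ}, and the passage from relative to absolute Dehn functions given by \cref{relativeConnection2}.

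First I would reduce to the case $T=\mathbb Z$. The hypothesis $\rk(G)=1$ means there is an exact sequence $1\to A\to G\to T\to 1$ with $T$ abelian of torsion-free rank $1$, i.e.\ virtually cyclic. By the discussion in \cref{prem2}, rank, Dehn function, and relative Dehn function are all preserved (up to equivalence) under passing to a finite-index subgroup, so I may replace $G$ by a finite-index subgroup $G_0$ with $\pi(G_0)=\mathbb Z$ and assume outright that $T=\mathbb Z$. The ring $\mathbb ZT=\mathbb Z[t,t^{-1}]$ is then Noetherian, so the $T$-module $A$ admits a finite module presentation $A\cong M/S$ with both $M$ and $S$ finitely generated. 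Taking $\mathcal A=\{a_1,\ldots,a_m\}$ a module generating set, the auxiliary group $\tilde G$ of \cref{prem2} is simply the wreath product $W=F\wr\mathbb Z$ with $F$ free abelian of rank $m$ (since $k=1$ there are no commutators $[t_i,t_j]$ to impose), and $G\cong W/\llangle\mathcal X\rrangle$ for a finite set $\mathcal X$ generating $S$ as a submodule of the base group.

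With this realization in hand, let $H=\langle\mathcal X\cup\{t\}\rangle\leq W$. By \cref{subgroupoverZ}, $\Delta_H^W(n)\asymp n^l$ for some $l\in\mathbb N$. The $k=1$ clause of \cref{subgroupDistortion} now gives
\[
\tilde\delta_G(n)\preccurlyeq\max\{n^3,(\Delta_H^W(n^2))^3\},
\]
which is polynomial (of degree at most $6l$). This proves the first assertion. If in addition $G$ is finitely presented, applying \cref{relativeConnection2} yields
\[
\delta_G(n)\preccurlyeq\max\{\tilde\delta_G^3(n^3),\,2^n\}.
\]
Because $\tilde\delta_G^3(n^3)$ is polynomial and hence dominated by $2^n$ for large $n$, we obtain $\delta_G(n)\preccurlyeq 2^n$.

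The step requiring the most care is the first one: correctly identifying $G$ as a quotient $W/\llangle\mathcal X\rrangle$ with $W=F\wr\mathbb Z$, $F$ free abelian of rank $m$, and $\mathcal X$ finite (using Noetherianity of $\mathbb Z[t,t^{-1}]$), so that the hypotheses of \cref{subgroupDistortion} are genuinely met. Once this bookkeeping is in place, the remainder is a direct composition of the three cited results, with no further calculation required.
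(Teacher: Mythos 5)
Your proposal is correct and follows essentially the same route as the paper: reduce to $T=\mathbb Z$ via a finite-index subgroup, realize $G$ as $W/\llangle\mathcal X\rrangle$ with $W=\mathbb Z^m\wr\mathbb Z$ (the paper does this by explicitly splitting the extension $1\to A\to G\to\mathbb Z\to 1$ and writing $G\cong(M\rtimes T)/\llangle f_1,\dots,f_l\rrangle$ rather than quoting the $\tilde G$ machinery, but for $k=1$ these amount to the same identification), and then compose \cref{subgroupDistortion}, \cref{subgroupoverZ}, and \cref{relativeConnection2}. One small labeling slip: the inequality $\tilde\delta_G(n)\preccurlyeq\max\{n^3,(\Delta_H^W(n^2))^3\}$ is the general (all $k$) part of \cref{subgroupDistortion}, not its ``$k=1$'' clause, which is the reverse bound $\Delta_H^W(n)\preccurlyeq\tilde\delta_G(n)$.
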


\begin{proof}
	By passing to a finite index subgroup, we can assume that there exists a short exact sequence
	\[1\to A\to G\to \mathbb Z\to 1,\]
	where $A$ is abelian.

	We denote by $T=\langle t\rangle$ the $\mathbb Z$ in the short exact sequence. Since every short exact sequence $1\to A\to G\to \mathbb Z\to 1$ splits, $G$ is isomorphic to the semidirect product $A\rtimes T$.
	
	Note that $A$ is a normal subgroup of $G$, then it is finitely generated as a $T$-module. Thus, there exists a free $T$-module $M$ of rank $m$ and a submodule $S=\langle f_1,f_2,\dots,f_l\rangle$ such that $A\cong M/S$. We have that 
	\[G\cong (M/S)\rtimes T\cong (M\rtimes T)/\llangle f_1,f_2,\dots,f_l\rrangle.\] 
	Let $\bar A$ be a free abelian group of rank $m$ and $W:=\bar A \wr T$ be the wreath product of $\bar A$ and $T$. Then there is an isomorphism $\varphi: M\rtimes T\to W$. We have
	\[G\cong W/\llangle \varphi(f_1),\varphi(f_2),\dots,\varphi(f_l)\rrangle.\]
	
	Let $H$ be the subgroup in $W$ generated by $\{\varphi(f_1),\varphi(f_2),\dots,\varphi(f_l),t\}$. By \cref{subgroupDistortion}, we have that 
	\[\tilde \delta_G(n) \preccurlyeq\max\{n^3, (\Delta_{H}^W(n^2))^3\}.\]
	By \cref{subgroupoverZ}, $\Delta_{H}^W(n)$ is a polynomial. Therefore the relative Dehn funcion $\tilde\delta_G(n)$ of $G$ is polynomially bounded. We are done for the relative Dehn function case.
	
	If $G$ is finitely presented, by \cref{relativeConnection2}, if the relative Dehn function is polynomially bounded, $\delta_G(n)$ is bounded above by the exponential function.
\end{proof}

\medskip

\bibliography{MyLibrary}{}
\bibliographystyle{alpha}

\end{document}